\documentclass[11pt,reqno]{amsart}

\usepackage[english]{babel}
\usepackage[utf8]{inputenc}
\usepackage[T1]{fontenc}
\usepackage{lmodern} \normalfont %to load T1lmr.fd 
\DeclareFontShape{T1}{lmr}{bx}{sc} { <-> ssub * cmr/bx/sc }{}
\usepackage{microtype}	% for improved spacing between words and letters

\usepackage{comment}

\usepackage{amssymb}
\usepackage{amsmath}
\usepackage{amsthm}
\usepackage{mathtools}
\usepackage{mathrsfs}
\usepackage{accents} % correct spacing for accents in sub- and superscript
\usepackage{etoolbox}
\usepackage{siunitx}
\usepackage{dsfont} % for special 1, \amthds{1}
\usepackage{graphicx}
\usepackage{color}
\usepackage[dvipsnames]{xcolor}
\usepackage{tikz}
\usetikzlibrary{calc,positioning,shapes}
\usetikzlibrary{patterns,decorations.pathmorphing,decorations.markings}
\usepackage{pgfplots}
\pgfplotsset{compat=newest}
\usepackage[margin=10pt,font=small,labelfont=bf,labelsep=endash]{caption}
\usepackage{subcaption}

\usepackage{booktabs}
\usepackage{paralist}
\usepackage{soul}

\numberwithin{equation}{section}

\usepackage{algorithm}
\usepackage{algorithmicx}
\usepackage{algpseudocode}

\usepackage{cite}
\usepackage{multirow} % for multirows in tabular environment
\usepackage{enumitem} % for labelling items in enumerate
\setlist[enumerate]{label=(\roman*)}

\usepackage[colorlinks,citecolor=black,urlcolor=black]{hyperref}
	\hypersetup{allcolors=black}
\usepackage[nameinlink,noabbrev]{cleveref}

\theoremstyle{plain}
\newtheorem{theorem}{Theorem}[section]

\newtheorem{lemma}{Lemma}

\newtheorem{remark}{Remark}

\newtheorem{assumption}{Assumption}

\makeatletter
\let\c@proposition\c@theorem
\let\c@lemma\c@theorem
\let\c@corollary\c@theorem
\let\c@remark\c@theorem
\let\c@definition\c@theorem
\let\c@assumption\c@theorem
\let\c@example\c@theorem
\makeatother



\crefname{theorem}{theorem}{theorems}
\Crefname{theorem}{Theorem}{Theorems}
\crefname{proposition}{proposition}{propositions}
\Crefname{proposition}{Proposition}{Propositions}
\crefname{lemma}{lemma}{lemmas}
\Crefname{lemma}{Lemma}{Lemmas}
\crefname{corollary}{corollary}{corollaries}
\Crefname{corollary}{Corollary}{Corollaries}
\crefname{remark}{remark}{remarks}
\Crefname{remark}{Remark}{Remarks}
\crefname{definition}{definition}{definitions}
\Crefname{definition}{Definition}{Definitions}
\crefname{assumption}{assumption}{assumptions}
\Crefname{assumption}{Assumption}{Assumptions}
\crefname{example}{example}{examples}
\Crefname{example}{Example}{Examples}

\crefname{section}{section}{sections}
\Crefname{section}{Section}{Sections}
\crefname{subsection}{subsection}{subsections}
\Crefname{subsection}{Subsection}{Subsections}
\crefname{subsubsection}{subsubsection}{subsubsections}
\Crefname{subsubsection}{Subsubsection}{Subsubsections}

\newcommand{\J}{\mathcal{J}}
\newcommand{\R}{\mathbb{R}}

\newcommand{\N}{\mathbb{N}}

\DeclareMathOperator*{\argmin}{arg\,min} %argmax
\DeclareMathOperator{\Sym}{sym}

\DeclareMathOperator{\vspan}{span}
\DeclareMathOperator{\reshape}{reshape}

\newcommand{\tenp}[2]{\underset{#1,#2}{*}}

\newcommand{\norm}[1]{\left\| #1 \right\|}

\newcommand{\TenDeriv}[1]{\mathbf{D}^{(#1)}}

\definecolor{mycolor1}{rgb}{0.00000,0.44700,0.74100}% blue
\definecolor{mycolor2}{rgb}{0.85000,0.32500,0.09800}% red
\definecolor{mycolor3}{rgb}{0.92900,0.69400,0.12500}% orange/yellow
\definecolor{mycolor4}{rgb}{0.46600,0.67400,0.18800}% green
\definecolor{mycolor5}{rgb}{0.49400,0.18400,0.55600}% purple
\definecolor{mycolor6}{rgb}{0.00000,0.2,0.4}% dark blue

\newcommand{\Matlab}{MATLAB\textsuperscript{\textregistered}}

\title[Higher order extended Kalman filters]{Approximations of the Mortensen observer using higher order extended Kalman filters}
\author{Tobias Breiten${}^{\star}$ \and Justus Ramme${}^\star$ \and Jesper Schr\"oder${}^\dagger$}
\address{${}^{\star}$ Institute of Mathematics, Technische Universität Berlin, Straße des 17. Juni 136, 10623 Berlin, Germany}
\address{${}^{\dagger}$ Johann Radon Institute, Austrian Academy of Sciences, A-4040 Linz, Austria}
\email{tobias.breiten@tu-berlin.de}
\email{ramme@math.tu-berlin.de}
\email{jesper.schroeder@ricam.oeaw.ac.at}
\date{\today}
\keywords{}

\begin{document}
%
%%%%%%%%%%%%%%%%%%%%%%%%%%%%%%%%%%%%%%%%%%%%%%%%%%%%%%%%%%%%%%%%%%%%%%%%%%%%%%%
%%%%%%%%%%%%%%%%%%%%%%%%%%%%%%%%%%%%%%%%%%%%%%%%%%%%%%%%%%%%%%%%%%%%%%%%%%%%%%%
\begin{abstract}
  A polynomial approximation of the minimum energy estimator, also called Mortensen observer, is discussed. The method relies on successive differentiations of an underlying value function and the Hamilton-Jacobi-Bellman equation, respectively. By means of neglecting higher order derivatives of the value function along the unknown observer trajectory, a coupled set of nonlinear tensor structured differential equations is derived. In its simplest form, the approach boils down to the well-known extended Kalman filter. Numerical experiments with polynomials up to the order eight illustrate the potential of the new approach and indicate local convergence to the Mortensen observer.
\end{abstract}
%%%%%%%%%%%%%%%%%%%%%%%%%%%%%%%%%%%%%%%%%%%%%%%%%%%%%%%%%%%%%%%%%%%%%%%%%%%%%%%
%%%%%%%%%%%%%%%%%%%%%%%%%%%%%%%%%%%%%%%%%%%%%%%%%%%%%%%%%%%%%%%%%%%%%%%%%%%%%%%
%

\maketitle
{\footnotesize \textsc{Keywords:} Mortensen observer, Kalman filter, nonlinear state estimation, generalized Lyapunov equations, polynomial approximations}

{\footnotesize \textsc{AMS subject classification:15A69, 49L12, 93B53, 93C15}} 

%15A69 - Multilinear algebra, tensor calculus
%49L12 - Hamilton-Jacobi equations in optimal control and differential games
%93B53 - Observers
%93C15 - Control/observation systems governed by ordinary differential equations

% 
%%%%%%%%%%%%%%%%%%%%%%%%%%%%%%%%%%%%%%%%%%%%%%%%%%%%%%%%%%%%%%%%%%%%%%%%%%%%%%%
%%%%%%%%%%%%%%%%%%%%%%%%%%%%%%%%%%%%%%%%%%%%%%%%%%%%%%%%%%%%%%%%%%%%%%%%%%%%%%%
\section{Introduction}
%%%%%%%%%%%%%%%%%%%%%%%%%%%%%%%%%%%%%%%%%%%%%%%%%%%%%%%%%%%%%%%%%%%%%%%%%%%%%%%
%%%%%%%%%%%%%%%%%%%%%%%%%%%%%%%%%%%%%%%%%%%%%%%%%%%%%%%%%%%%%%%%%%%%%%%%%%%%%%%
%

This work considers a nonlinear system of the form
\begin{equation}\label{eq: distSys}
	\begin{aligned}
		\dot{x}(t) &= f(x(t)) + G v(t),~ t \in (0,T],\\
		x(0) &= x_0 + \eta,
	\end{aligned} 
\end{equation}
where $f\colon \mathbb R^n\to \mathbb R^n, G\in \mathbb R^{n,m}$ and $x_0 \in \mathbb R^n$ are known characteristics of the dynamic and $v\in L^2(0,T;\mathbb R^m), \eta \in \mathbb R^n$ denote unknown deterministic disturbances. Given 
measurements 
\begin{equation}\label{eq: distMeas}
	y(t) = h(x(t)) + \mu(t)
\end{equation}
with $h\colon \mathbb R^n \to \mathbb R^p$ which are subject to deterministic disturbances $\mu \in L^2(0,T;\mathbb R^p),$ our goal is to derive a sequence of nonlinear equations which locally approximate the minimum energy estimator, also called Mortensen observer, originally proposed in \cite{Mor68}. Referring to the latter work, let us consider the Luenberger-type observer

\begin{equation}\label{eq: observerEq}
	\begin{aligned}
		\dot{\widehat{x}}_{\mathrm{M}}(t) &= f(\widehat{x}_{\mathrm{M}}(t)) 
		+  (\nabla^2 \mathcal{V}(t,\widehat{x}_{\mathrm{M}}(t)))^{-1} \mathcal{J}_h(\widehat{x}_{\mathrm{M}}(t))^\top Q (y(t) - h(\widehat{x}_{\mathrm{M}}(t) ) ),~ t \in (0,T],\\
		\widehat{x}_{\mathrm{M}}(0) &= x_0,
	\end{aligned}
\end{equation}
where $\nabla^2 \mathcal{V}$ and $\mathcal{J}_h$ denote the Hessian and the Jacobian with respect to the spatial variable, respectively.
The time and space dependent output injection operator is implicitly characterized by the following time-dependent, nonhomogeneous Hamilton-Jacobi-Bellman (HJB) equation
\begin{equation}\label{eq: HJB}
	\begin{aligned}
		\partial_t \mathcal{V}(t,\xi) &=
		-\nabla \mathcal{V}(t,\xi)^\top f(\xi) 
		- \frac{1}{2} \Vert G^\top \nabla \mathcal{V}(t,\xi) \Vert_{R^{-1}}^2
		+ \frac{1}{2} \Vert y(t) - h(\xi) \Vert_Q^2,\\
		\mathcal{V}(0,\xi) &= \frac{1}{2} \Vert \xi - x_0 \Vert_\Gamma^2,
	\end{aligned}
\end{equation}
where $\nabla \mathcal{V}$ denotes the spatial gradient.
Here, $\mathcal{V}\colon [0,T]\times \mathbb R^n \to \mathbb R$ is the minimal value function associated with the (backwards running) optimal control problem 
\begin{equation}\label{eq:P_t} 
\begin{aligned}
&\underset{v\in L^2(0,t;\mathbb R^m)}{\min} ~\mathcal{J}(t,\xi; v) = \frac{1}{2} \|x(0) - x_0 \|_\Gamma^2 + \frac{1}{2} \int_{0}^{t} \|v(s)\|_{R}^2 + \alpha \|y(s)-h(x(s))\|_{Q}^2 ~\mathrm{d}s \\[2.5ex]
&\text{s.t. } \dot{x}(s) = f(x(s)) + Gv(s),~s \in (0,t), ~x(t) = \xi, \\
\end{aligned}
\end{equation}
with symmetric, positive definite weighting matrices $\Gamma$, $R$, and $Q$.
Assuming sufficient smoothness of $\mathcal{V}$, in \cite{Mor68} and later also in \cite{Fle97} it was shown that if one defines an estimate as
\begin{equation}\label{eq: observerDef}
	\widehat{x}_{\mathrm{M}}(t) \coloneqq \argmin\limits_{\xi \in \mathbb{R}^n} \mathcal{V}(t,\xi),
\end{equation}
the dynamical system \eqref{eq: observerEq} characterizes the evolution of $\widehat{x}_{\mathrm{M}}(\cdot)$ over $[0,T]$. While there exist research works on the Mortensen observer taking a filtering perspective, see, e.g., \cite{Hij80,Hij82,ChaEtAl23,ChauMeMoZi25} and \cite{Moi18} for the discrete-time case, throughout this manuscript, we follow the deterministic viewpoint proposed in the original formulation \cite{Mor68}. Let us also refer to \cite{Fle97} and \cite{Wil04} where detailed derivations based on a deterministic optimal control framework have been presented. Due to the strong assumptions required for rigorously defining \eqref{eq: observerEq}, the literature focusing on the theoretical aspects of the Mortensen observer is rather scarce, often assuming the involved nonlinearities to be globally Lipschitz \cite{Kre03}. More recently, for systems with quadratic nonlinearity,  we have shown the local well-posedness of \eqref{eq: observerEq} by employing a sensitivity analysis for the value function in \eqref{eq: HJB}, see \cite{BreS24}. 
It is well known that for linear dynamics and measurements, i.e., $f(\xi) = A \xi $ and $h(\xi) = C \xi$, with matrices $A \in \mathbb{R}^{n,n}$, $C \in \mathbb{R}^{p,n}$ the Mortensen observer reduces to the classic Kalman filter.
Hence, the Mortensen observer can be viewed as an energy optimal extension of the Kalman filter to nonlinear systems. Unfortunately, its dependence on $\nabla^2 \mathcal{V}$ makes its numerical realization a highly challenging task. While in principal the value function is available as the solution of \eqref{eq: HJB}, solving such HJB equations is known to suffer from the curse of dimensionality \cite{Bel61}. 

Recently, various approaches tackling this issue were proposed. Let us mention the learning based methods utilizing neural networks  \cite{BreKu21} or structured polynomials \cite{BreKuSc23}. We also point to an algorithmic realization based on max-plus algebra \cite{FleMcE00}. Here we take a different route and instead consider numerical approaches based on the local approximation of the value function. 
Assuming sufficient regularity of $\mathcal{V}$, one can  characterize the Hessian along the observer $ \Pi(t) \coloneqq \nabla^2 \mathcal{V}(t,\widehat{x}_\mathrm{M}(t)) $ via the differential equation
\begin{equation}\label{eq: Hessian_V}
\begin{aligned}
    &\dot{\Pi}(t) =
    -\mathcal{J}_f(\widehat{x}_{\mathrm{M}}(t))^\top \Pi(t)
    -\Pi(t) \mathcal{J}_f(\widehat{x}_{\mathrm{M}}(t)) 
    - \Pi(t) G R^{-1} G^\top \Pi(t) 
    +
    \mathcal{J}_h(\widehat{x}_{\mathrm{M}}(t))^\top Q \mathcal{J}_h(\widehat{x}_{\mathrm{M}}(t)) \\
    &\qquad + \TenDeriv{3} \mathcal{V} (t,\widehat{x}_{\mathrm{M}}(t))  
    \tenp{1}{1}
    \Pi(t)^{-1} (\mathcal{J}_h(\widehat{x}_{\mathrm{M}}(t))^\top Q (y(t)-h(\widehat{x}_{\mathrm{M}}(t))) \\
    &\qquad - \TenDeriv{2} h (\widehat{x}_{\mathrm{M}}(t))  \tenp{1}{1} Q (y(t)-h(\widehat{x}_{\mathrm{M}}(t))), \\
    &\Pi(0) = \Gamma.
\end{aligned}
\end{equation}
Here $\TenDeriv{3} \mathcal{V}(t,\xi) \in \mathbb{R}^{n,n,n}$ and $ \TenDeriv{2} h(\xi) \in \mathbb{R}^{p,n,n}$ denote order three spatial derivatives as tensors and $\tenp{1}{1}$ denotes an appropriate tensor multiplication, cf.~\Cref{subsec:tensor_prelim} below for the details.
\Cref{eq: Hessian_V} is of interest from different perspectives. On the one hand, if we assume that $\TenDeriv{2} h$ and $\TenDeriv{3} \mathcal{V}$ vanish along $\widehat{x}_{\mathrm{M}}(\cdot),$ then \eqref{eq: observerEq} together with \eqref{eq: Hessian_V} boil down to the well-known extended Kalman filter
\begin{equation}\label{eq: ext Kalman}
	\begin{aligned}
		\dot{\widehat{x}}_{\mathrm{K}}(t) &= f(\widehat{x}_{\mathrm{K}}(t)) + \Sigma(t) \, \mathcal{J}_h(\widehat{x}_{\mathrm{K}}(t))^\top Q ( y(t) - h(\widehat{x}_{\mathrm{K}}(t))), \\ \widehat{x}_{\mathrm{K}}(t) &= x_0, \\[2ex]
        \dot{\Sigma}(t) &= \mathcal{J}_f(\widehat{x}_{\mathrm{K}}(t)) \, \Sigma(t) + \Sigma(t) \, \mathcal{J}_f(\widehat{x}_{\mathrm{K}}(t))^\top \\
        & \quad 
		- \Sigma(t) \, \mathcal{J}_h(\widehat{x}_{\mathrm{K}}(t))^\top Q (\mathcal{J}_h(\widehat{x}_{\mathrm{K}}(t))) \, \Sigma(t) + G R^{-1} G^\top, \\ \Sigma(0) &= \Gamma^{-1},
	\end{aligned}
\end{equation}
where we have replaced \eqref{eq: Hessian_V} by an explicit equation for $\Sigma(t)\coloneqq\nabla^2 \mathcal{V}(t,\widehat{x}_{\mathrm{K}}(t))^{-1}.$ On the other hand, it turns out that for non-vanishing $\TenDeriv{3} \mathcal{V}$, the second derivative involves the (unknown) third derivative of the value function. In fact, this analogously carries over to all higher order derivatives of $\mathcal{V}$ and was already referred to as \emph{closure problem} in \cite{Mor68}. While this naturally prevents from iteratively computing any of the derivatives of $\mathcal{V}$ ``exactly'', at the same time it allows for numerical approximations based on the assumption that $\TenDeriv{k}\mathcal{V}$ vanishes and it will be the starting point of our contribution here.
Note that the idea of approximating $\nabla^2 \mathcal{V}$ by means of a Taylor series is not new and has been proposed in \cite{Kre15,Kre18} for discrete-time systems. In fact, using power series based approximations was already suggested in \cite{Alb61} in the closely related context of optimal feedback stabilization. Here, the literature is quite rich and we exemplarily mention \cite{Luk69} for a theoretical analysis of Taylor series approximations for analytic control systems, \cite{AguK14,KreAH13} for a numerical realization for small and medium scale systems as well as the more recent work \cite{BorZ21,BreKP18,Kraetal24,CorK25} utilizing tensor calculus in the large scale case. Moreover, we further refer to \cite{BreKP19} where polynomial feedback laws have been studied in the context of infinite-dimensional bilinear systems. Despite the overall common goal of locally approximating the value function, it should be pointed out that there is a fundamental conceptual difference that distinguishes the computation of optimal feedback laws from the computation of the Mortensen observer.

In particular, the first class of methods typically considers infinite horizon stabilization problems around an a priori given steady state $x_s$, often $x_s=0$, and subsequently aims for an approximation $F$ of the optimal feedback taking the form
\begin{align}\label{eq:vf_approx_cont}
    F(x)
    =
    -G^\top \nabla \mathcal{V}_\mathrm{apr}(x)
    \approx
    -G^\top \nabla \mathcal{V}(x),
\end{align}
where the value function is locally approximated using its $k$-th Taylor polynomial, i.e.,
\begin{equation*}
\begin{aligned}
   \mathcal{V}_\mathrm{apr}(x)
    &=
    \mathcal{V}(x_s)
    +
    \nabla \mathcal{V}(x_s)^\top (x-x_s)
    +
    \frac{1}{2} (x-x_s)^\top \nabla^2 \mathcal{V}(x_s) (x-x_s)\\
    &+
    \sum_{i=3}^k \frac{1}{i!} \TenDeriv{k} \mathcal{V}(x_s) (x-x_s)^k,
\end{aligned}
\end{equation*}
where the last summand formally denotes the multiplication of $(x-x_s)$ to each of the $k$ sides of the tensor $\TenDeriv{k} \mathcal{V}(x_s)$.
These tensors are obtained as solutions of a set of tensor equations where the equation characterizing $\TenDeriv{j} \mathcal{V}$ depends only on $\TenDeriv{i} \mathcal{V}$, $i\leq j$. Hence they can be solved iteratively and yield exact derivatives. The obtained feedback law is expected to perform well, for initial states $x_0$ sufficiently close to $x_s$. Whenever the closed loop system driven by the feedback $F$ visits states $x$ far away from $x_s$, the approximation quality of the Taylor polynomial may degrade quickly.

The approximation scheme discussed in this work is conceptually different. In fact, assuming knowledge of the third derivative $\TenDeriv{3} \mathcal{V} (t,\widehat{x}_{\mathrm{M}}(t))$, \cref{eq: Hessian_V} yields the exact optimal observer gain and coupling it with the observer equation \eqref{eq: observerEq} would yield the optimal observer $\widehat{x}_\mathrm{M}$. The derivation of the tensor equation characterizing $\TenDeriv{3} \mathcal{V} (t,\widehat{x}_{\mathrm{M}}(t))$ however, corresponds to an expansion of $\mathcal{V}(t,\cdot)$ around $\widehat{x}_\mathrm{M}(t)$. The time-dependence of the expansion point introduces a coupling such that determining the third derivative requires the forth. As mentioned above, this effect carries over to all higher order derivatives. Hence truncating the $k$-th order derivative implicitly affects the approximation of $\TenDeriv{3} \mathcal{V} (t,\widehat{x}_{\mathrm{M}}(t))$ which enters the Riccati-like equation \eqref{eq: Hessian_V}. Therefore the obtained approximations of derivatives are not in fact coefficients of the associated Taylor polynomial and as of now no results regarding convergence or error bounds are available. On the other hand, the approach is not as sensitive towards the initial state. As pointed out above, an application of the feedback control \eqref{eq:vf_approx_cont} to a system with initial state $x_0$ far away from the equilibrium $x_s$ would almost certainly fail. The same effect does not occur in the scheme analyzed in this work, as the expansion of the value function is done around the minimizing trajectory $\widehat{x}_\mathrm{M}$ itself. 

Our main results now can be summarized as follows:
\begin{itemize}
    \item Utilizing a generalized product rule for the differentiation of two vector-valued functions, in \Cref{LEM:HigherDerivativeHJB} we derive explicit expressions for arbitrary higher order derivatives of the HJB equation \eqref{eq: HJB}.
    \item Assuming a particular order of the derivative of $\mathcal{V}$ to vanish along the Mortensen observer, in \Cref{thm:main} we characterize $\widehat{x}_\mathrm{M}$ by a set of nonlinear and partially structured tensor differential equations. For the general case without a vanishing derivative, the same set of equations subsequently defines an approximation to the Mortensen observer, thereby generalizing the well-known extended Kalman filter, cf.~\Cref{rem:HOEKF}.
    \item An implementation of the higher order Kalman filter for two academic examples illustrates the feasibility and effectiveness of the approximation. The application to a low-dimensional chaotic oscillator indicates convergence to the Mortensen observer with increasing orders of the HOEKF. In addition, a realization for a semi-discretized, cubic wave equation demonstrates the effectiveness for systems of larger dimensions. The results are compared to available data-driven methods for the realization of the Mortensen observer, showing that the HOEKF outperforms them in accuracy and computational efficiency.  
\end{itemize}
%
%%%%%%%%%%%%%%%%%%%%%%%%%%%%%%%%%%%%%%%%%%%%%%%%%%%%%%%%%%%%%%%%%%%%%%%%%%%%%%%
%%%%%%%%%%%%%%%%%%%%%%%%%%%%%%%%%%%%%%%%%%%%%%%%%%%%%%%%%%%%%%%%%%%%%%%%%%%%%%%
\section{Some results from tensor calculus}
%%%%%%%%%%%%%%%%%%%%%%%%%%%%%%%%%%%%%%%%%%%%%%%%%%%%%%%%%%%%%%%%%%%%%%%%%%%%%%%
%%%%%%%%%%%%%%%%%%%%%%%%%%%%%%%%%%%%%%%%%%%%%%%%%%%%%%%%%%%%%%%%%%%%%%%%%%%%%%%
%
For deriving a sequence $\TenDeriv{k} \mathcal{V}$ locally approximating the Mortensen observer and characterizing the required terms via a coupled set of differential equations, we identify higher order derivatives with multidimensional arrays or tensors. Here, we restrict ourselves to the finite-dimensional case $\R^{n_1,\dots,n_d}$ and refer to \cite{Hac19} for an in-depth treatment of tensor products for general Banach spaces. Most of the presented material is well-known in the context of tensor calculus and our exposition largely follows the ones from \cite{Wol19,Sal22}.

\subsection{Basic notions and preliminaries}
\label{subsec:tensor_prelim}

We denote by $\mathbf{T}$ a \emph{tensor} of order $d$, i.e., a mapping 
\begin{equation}\label{EQ:Tensor}
    \mathbf{T} \colon \N_{n_1}\times \dots \times \N_{n_d} \to \R,\quad (i_1,\dots,i_d)\mapsto \mathbf{T}[i_1,\dots,i_d].
  \end{equation}
  The set of all such tensors is denoted by  
  \[
  \R^{n_1,\dots, n_d}\coloneqq \left\{\mathbf{T}\mid \mathbf{T}\colon\N_{n_1}\times \dots \times \N_{n_d} \to \R \right\}.
  \]
We refer to $n_\mu$ as the \emph{$\mu$-th dimension} of $\mathbf{T}$ and to $d\in\N$ as the \emph{order} of $\mathbf{T}$. 
In particular, we identify matrices as order-two tensors via $\mathbf{C}[i,j] = C_{ij}$ for $C\in\R^{n,m}$.
Given $0<\ell<d$ and two tensors $\mathbf{A}\in \R^{n_1,\dots,n_\ell},\mathbf{B}\in \R^{n_{\ell+1},\dots,n_d}$ of order $\ell$ and $d-\ell$, respectively, the \emph{tensor product} is defined by
\[
    \otimes\colon  \R^{n_1,\dots,n_\ell}\times\R^{n_{\ell+1},\dots,n_d}\to \R^{n_1,\dots,n_d}, \quad \left(\mathbf{A}\otimes \mathbf{B}\right)[i_1,\dots,i_d] \coloneqq \mathbf{A}[i_1,\dots,i_\ell]\mathbf{B}[i_{\ell+1},\dots i_d].
  \]
Similarly, for the tensor product of two tensor spaces, we set
\[
    \R^{n_1,\dots,n_\ell}\otimes\R^{n_{\ell+1},\dots,n_d} \coloneqq \vspan \left\{ \left(\mathbf{A}\otimes \mathbf{B}\right)\mid \mathbf{A}\in \R^{n_1,\dots,n_\ell}, \mathbf{B}\in \R^{n_{\ell+1},\dots,n_d}  \right\}.
  \]
When all the $n_i$ are equal, i.e., $n_1,\dots,n_d=n$, we use the shorthand notation $\bigotimes^d\R^n.$
Below, we consider successive differentiations of the HJB equation \eqref{eq: HJB} for which we utilize a generalized product rule for tensors. For this purpose, it is convenient to make use of appropriate \emph{reshapings} of the underlying multidimensional arrays. Given $d\in\N$ and a permutation $\sigma\in S_d$, a \emph{reshape} is an isomorphism defined as
  \[
    \reshape_\sigma\colon \R^{n_1,\dots,n_d} \to \R^{n_{\sigma(1)},\dots,n_{\sigma(d)}}, \quad \reshape_\sigma(\mathbf{A})[i_1,\dots,i_d] = \mathbf{A}[i_{\sigma(1)},\dots,i_{\sigma(d)}].
  \]
In the following we shorten the above notation and often simply write $\mathbf{A}^\sigma$ instead of $\reshape_\sigma(\mathbf{A})$.

\begin{remark}
  Let us note that there are different ways of defining the reshape operator. In particular, the one used in this manuscript differs from the one used in \cite{Wol19,Sal22} which reads 
  \[
    \widetilde{\reshape}_\sigma \mathbf{A}[i_{\sigma(1)},\dots,i_{\sigma(d)}] = \mathbf{A}[i_1,\dots,i_d].
  \]
  In other words, the difference in notation is caused by the implicit use of the inverse permutation and we have the following relation
  \[
    \reshape_\sigma(\mathbf{A}) = \widetilde{\reshape}_{\sigma^{-1}} \mathbf{A}.
  \]
  While the notation introduced in \cite{Wol19,Sal22} corresponds to the way the reshape operation is encoded in \Matlab, our notation allows for a more intuitive way of deriviving the differentiation rules in \Cref{S:DifferentiationRules} and has been used in a similar context in \cite{BreKP19}.
\end{remark}
Besides the tensor product, we also need a generalization of the matrix multiplication, typically referred to as \emph{tensor contraction.}
Let $\mathbf{A}\in \R^{n_1,\dots,n_d}$ and $\mathbf{B}\in \R^{m_1,\dots,m_\ell}$ be two tensors of order $d$ and $\ell$ respectively. The contraction $\mathbf{A}\tenp{\mu}{\nu} \mathbf{B}$ of the $\mu$-th dimension of $\mathbf{A}$ with the $\nu$-th  dimension of $\mathbf{B}$ is defined entrywise as
  \begin{multline*}
    \left(\mathbf{A}\tenp{\mu}{\nu} \mathbf{B}\right)[i_1,\dots,i_{\mu-1},i_{\mu+1},\dots,i_d,j_1,\dots,j_{\nu-1},j_{\nu+1},\dots,j_\ell] \\
    \coloneqq \sum_{k=1}^{n_\mu} \mathbf{A}[i_1,\dots,i_{\mu-1},k,i_{\mu+1},\dots,i_d] \mathbf{B}[j_1,\dots,j_{\nu-1},k,j_{\nu+1},\dots, j_\ell].
  \end{multline*}
For the contraction of two tensors to be well-defined,  it is required that $n_\mu=m_\nu$.
The resulting tensor $\mathbf{A}\tenp{\mu}{\nu} \mathbf{B}\in \R^{n_1,\dots,n_{\mu-1},n_{\mu+1},\dots, n_d,m_1,\dots,m_{\nu-1},n_{\nu+1},m_\ell}$ is of order $d+\ell-2$.
\begin{remark}
    For fixed dimensions, the contraction operation is associative. However, one downside of the introduced notation is that whenever the order of operations is switched, the dimensions in the contractions need to be adjusted. 
As an example consider $\mathbf{A}\in\R^{n_1,n_2,n_3}$, $\mathbf{B}\in\R^{m_2,n_2,p}$ and $\mathbf{C}\in\R^{m_1,m_2,m_3}$. Then it holds that
\begin{align*}
  \mathbf{A}\tenp{2}{1}(\mathbf{B}\tenp{1}{2}\mathbf{C})
  & = \sum_{j=1}^{n_2} \sum_{k=1}^{m_2} \mathbf{A}[i_1,j,i_2]\mathbf{B}[k,j,i_3]\mathbf{C}[i_4,k,i_5]
  = \left(\mathbf{A}\tenp{2}{2}\mathbf{B}\right)\tenp{3}{2}\mathbf{C}.
\end{align*}
Note that the term  $(\mathbf{A}*_{2,1}\mathbf{B})*_{1,2}\mathbf{C}$ is not even well-defined since in general $n_1\neq m_2$. Thus, whenever we switch the order of operation we need to also adjust the syntax.
\end{remark}

When deriving the observer equations, we often need to contract a tensor and a matrix followed by applying a certain permutation. Thus, for $\mathbf{A}\in \R^{n_1,\dots,n_d}$ and $\mathbf{C}\in \R^{n_\mu,m}$ we define 
\begin{align*}
  \left(\mathbf{A} \times_\mu \mathbf{C}\right)[i_1,\dots,i_k] \coloneqq& \sum_{k=1}^{n_\mu} \mathbf{A}[i_1,\dots,i_{\mu-1},k,i_{\mu+1},\dots,i_d]\mathbf{C}[k,i_\mu]\\
  =& \left(  \mathbf{A} \tenp{\mu}{1} \mathbf{C} \right)^{(1,\dots,\mu-1,\mu+1,\dots,d,\mu)}.
\end{align*}
Here, we contract the $\mu$-th dimension of $\mathbf{A}$ with the first dimension of $\mathbf{C}$ and then perform a reshape operation to restore the order of the dimensions of $\mathbf{A}$ and substitute the contracted dimension with the remaining dimension of $\mathbf{C}$.

\subsection{Differentiation rules for tensors}\label{S:DifferentiationRules}

In this subsection, we recall some basics on the derivatives of vector-valued functions and identify these derivatives with tensors. We also state a generalized product rule that already has been utilized in \cite{BreKP19}.

First, recall that the $k$-th derivatives of a multivariable function $u\colon\R^n\to\R$ can be identified with a $k$-linear map $D^{k} u\colon (\R^n)^{\times k}\to \R$.

Due to the natural connection between tensors and multilinear mappings, given $u\in C^k(\R^n;\R)$ and $g\in C^k(\R^n;\R^m)$, we define
\begin{align*}
  \TenDeriv{k} u(\xi)[j_1,\dots,j_k] \coloneqq \partial_{j_k,\dots,j_1} u(\xi)
\quad \text{and} \quad
  \TenDeriv{k}g(\xi)[i,j_1,\dots,j_k] \coloneqq \partial_{j_k,\dots,j_1} g_i(\xi).
\end{align*}
This way we have that 
\[
  D^ku(\xi)(e_{j_1},\dots,e_{j_k}) = \partial_{j_k,\dots,j_1}u(\xi) = \TenDeriv{k}u(\xi)[j_1,\dots,j_k],
\]
where $D^ku(v_1,\dots,v_k)$ is the $k$-th directional derivative of $u$,  first with respect to $v_1$ and last with respect to $v_k$ and $e_j$ denotes the $j$-th unit vector.
\begin{remark}\label{REM:TensorDeriv}
  ~
  \begin{enumerate}[topsep=0pt]
      \item Note that for fixed $i$, we have that
      \[
      \TenDeriv{k}g(\xi)[i,j_1,\dots,j_k]=\TenDeriv{k}g_i(\xi)[j_1,\dots,j_k].
      \]
      \item Similar to \cref{eq: Hessian_V}, we can identify $\TenDeriv{1}u$, $\TenDeriv{2}u$ and $\TenDeriv{1}g$ with the gradient $\nabla u$, Hessian $\nabla^2 u$ and Jacobian $\mathcal{J}_g$, respectively. Technically,  $\TenDeriv{2}u$  corresponds to the transpose of the Hessian, due to the ordering of its entries. However, since we assumed continuous differentiability, by the theorem of Schwarz the Hessian is symmetric and hence the identification with $\TenDeriv{2}u$ is justified.
  \end{enumerate}
\end{remark}
Next, we recall a generalized product rule that can be found in, e.g., \cite[Proposition 5]{Har06} or \cite[Lemma 12]{BreKP19}. For this purpose, let us define the set of permutation $S_{i,j}\subset S_m$ as follows.
\begin{equation*}
  S_{i,j} \coloneqq \left\{ \sigma \in S_{i+j} \mid \sigma(1)<\dots<\sigma(i) \text{ and } \sigma(i+1)<\dots< \sigma(i+j) \right\}.
\end{equation*}
Note that $|S_{i,j}|= \binom{i+j}{i}$.
We then have the following result.
\begin{lemma}\label{LEM:PartDivProductRule}
  Let $u,v\in C^k(\R^n;\R)$. Then it holds that
  \begin{equation*}
    \partial_{j_1,\dots,j_k} (uv) = \sum_{i=0}^{k} \sum_{\sigma \in S_{i,k-i}} (\partial_{j_{\sigma(1)}, \dots ,j_{\sigma(i)}} u) (\partial_{j_{\sigma(i+1)}, \dots ,j_{\sigma(k)}}v).
  \end{equation*}
\end{lemma}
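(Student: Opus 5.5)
I will prove the statement by induction on $k$. Before starting, I fix how a permutation $\sigma\in S_{i,k-i}$ is read: it is determined by the index set $A=\{\sigma(1),\dots,\sigma(i)\}\subset\{1,\dots,k\}$ of size $i$, and $\{\sigma(i+1),\dots,\sigma(k)\}$ is the complement $A^c$, both listed in increasing order. The right-hand side is therefore the sum over all subsets $A\subseteq\{1,\dots,k\}$ of $(\partial_{j_A}u)(\partial_{j_{A^c}}v)$, where $\partial_{j_A}$ applies the derivatives $j_a$, $a\in A$, in increasing order of $a$. Since $u,v\in C^k$, Schwarz's theorem makes the order of differentiation in each factor irrelevant, so this bijection between $S_{i,k-i}$ and $i$-element subsets is the only combinatorial input needed. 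It also explains the count $|S_{i,k-i}|=\binom{k}{i}$.

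For $k=0$ the formula reads $uv=uv$, and for $k=1$ it is the ordinary product rule. For the induction step, assume the formula holds for $k-1$ and all $C^{k-1}$ functions. I write $\partial_{j_1,\dots,j_k}(uv)=\partial_{j_1}\bigl(\partial_{j_2,\dots,j_k}(uv)\bigr)$; again by Schwarz it does not matter which index is peeled off first. Applying the induction hypothesis to the indices $j_2,\dots,j_k$ gives a sum over subsets $B\subseteq\{2,\dots,k\}$ of $(\partial_{j_B}u)(\partial_{j_{B^c}}v)$, with the complement taken in $\{2,\dots,k\}$. Each product is $C^1$ because $u,v\in C^k$, so the ordinary product rule applies termwise and $\partial_{j_1}$ of each product splits into two terms. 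In the first, $\partial_{j_1}$ falls on $u$, which corresponds to the subset $A=B\cup\{1\}$. In the second, it falls on $v$, which corresponds to $A=B$, with $1$ placed in the complement.

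The map sending $(B,\text{choice})$ to $A$ is a bijection onto all subsets of $\{1,\dots,k\}$. Using symmetry of the mixed partials to move $j_1$ into its increasing position, the result is exactly the claimed sum. The only delicate point is the bookkeeping between permutations in $S_{i,k-i}$ and subsets, together with the ordering of the differentiation indices, and Schwarz's theorem handles the ordering. An alternative route would be a Pascal-type recursion $|S_{i,k-i}|=|S_{i-1,k-i}|+|S_{i,k-i-1}|$ applied directly to the permutations. I prefer the subset formulation because it is cleaner.
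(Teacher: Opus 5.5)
Your proof is correct. Identifying $\sigma\in S_{i,k-i}$ with the $i$-element set $\{\sigma(1),\dots,\sigma(i)\}$ gives a complete argument, together with peeling off $\partial_{j_1}$, applying the ordinary product rule to each summand of the induction hypothesis, and matching the pairs (subset $B$, choice of factor) bijectively with subsets $A\subseteq\{1,\dots,k\}$.

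The paper does not prove the lemma itself. It cites \cite[Proposition 5]{Har06} and \cite[Lemma 12]{BreKP19}, refers to the latter for a proof in the language of multilinear forms, and then repackages the result as the symmetrized identity \eqref{EQ:ScalarProductRule}. Your argument is a self-contained, elementary coordinate version. Its advantage is that it makes explicit the correspondence between $S_{i,k-i}$ and $i$-subsets of positions, which is exactly what justifies the rewriting as $\Sym_{i,k-i}(\TenDeriv{i}u\otimes\TenDeriv{k-i}v)$. The multilinear-form version hides this correspondence in the symmetrization but is coordinate-free.

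One small refinement: Schwarz's theorem is not actually needed. The paper's definition of $\TenDeriv{k}$ implies that the leftmost index in $\partial_{j_1,\dots,j_k}$ is differentiated last, so $\partial_{j_1,\dots,j_k}=\partial_{j_1}\circ\dots\circ\partial_{j_k}$. With this convention, peeling $\partial_{j_1}$ off from the outside automatically puts position $1$ at the front of the increasing index list of whichever factor receives it. Invoking Schwarz is harmless here, since $u,v\in C^k$.
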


The proof can be found in \cite{BreKP19} in the language of multilinear forms.
Introducing the operator
\[
  \Sym_{i,j}(\mathbf{T}) = \sum_{\sigma \in S_{i,j}} \mathbf{T}^{\sigma}
\]
for $\mathbf{T}\in \bigotimes^{i+j}\R^n$,
we can write the derivative of $uv$ as
\begin{equation}\label{EQ:ScalarProductRule}
  \TenDeriv{k}(uv) = \sum_{i=0}^{k} \Sym_{i,k-i} \left( \TenDeriv{i}u \otimes \TenDeriv{k-i}v \right).
\end{equation}
\begin{remark}
  Note that in \cite{BreKP19} the operator $\Sym_{i,j}(\mathbf{T})$ is further multiplied by $\binom{i+j}{i}^{-1}$ to represent the symmetric part of the tensor $\mathbf{T}$. We choose to omit the scaling factor in the definition of $\Sym_{i,j}$, since it cancels out in the equations \eqref{EQ:ScalarProductRule} and \eqref{EQ:ProductRule}.
\end{remark}

Later we need to differentiate the contraction of two vector-valued functions. The previous product rule can easily be generalized to this setting.
\begin{lemma}\label{LEM:ProductRule}
Let $u,g\in C^k(\R^n;\R^m)$, then it holds that
  \begin{equation}\label{EQ:ProductRule}
    \TenDeriv{k}(u^\top g) = \sum_{i=0}^{k} \Sym_{i,k-i}\left( \TenDeriv{i}u \tenp{1}{1} \TenDeriv{k-i}g \right).
  \end{equation}
\end{lemma}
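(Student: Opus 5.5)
The plan is to reduce \Cref{LEM:ProductRule} to the scalar product rule \eqref{EQ:ScalarProductRule} by linearity. Writing $u^\top g = \sum_{\ell=1}^m u_\ell g_\ell$, we have
\[
\TenDeriv{k}(u^\top g) = \sum_{\ell=1}^m \TenDeriv{k}(u_\ell g_\ell) = \sum_{\ell=1}^m \sum_{i=0}^{k} \Sym_{i,k-i}\left(\TenDeriv{i}u_\ell \otimes \TenDeriv{k-i}g_\ell\right),
\]
where \eqref{EQ:ScalarProductRule} (equivalently \Cref{LEM:PartDivProductRule}) is applied to each pair $u_\ell,g_\ell\in C^k(\R^n;\R)$. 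Since $\Sym_{i,k-i}$ is a finite sum of reshapes and each reshape is linear, the sum over $\ell$ can be moved inside $\Sym_{i,k-i}$. It then remains to identify the tensor $\sum_{\ell}\TenDeriv{i}u_\ell\otimes\TenDeriv{k-i}g_\ell\in\bigotimes^k\R^n$ with $\TenDeriv{i}u\tenp{1}{1}\TenDeriv{k-i}g$.

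For this identification we check entries. By \Cref{REM:TensorDeriv}(i),
\[
\TenDeriv{i}u[\ell,j_1,\dots,j_i]=\TenDeriv{i}u_\ell[j_1,\dots,j_i], \qquad \TenDeriv{k-i}g[\ell,j_{i+1},\dots,j_k]=\TenDeriv{k-i}g_\ell[j_{i+1},\dots,j_k].
\]
By the definition of contraction, contracting the first index of each tensor gives a tensor of order $i+(k-i)=k$ with entries
\[
\sum_{\ell=1}^m \TenDeriv{i}u_\ell[j_1,\dots,j_i]\,\TenDeriv{k-i}g_\ell[j_{i+1},\dots,j_k],
\]
which is exactly the $[j_1,\dots,j_k]$ entry of $\sum_\ell \TenDeriv{i}u_\ell\otimes \TenDeriv{k-i}g_\ell$. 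The index ordering matches: the remaining indices of the first factor come first, then those of the second.

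The only delicate point is the boundary cases $i=0$ and $i=k$. There $\TenDeriv{0}u=u$ is an order-one tensor in $\R^m$, and the contraction $u\tenp{1}{1}\TenDeriv{k}g$ yields the order-$k$ tensor $\sum_\ell u_\ell\TenDeriv{k}g_\ell$. The symmetrizer $\Sym_{0,k}$ (respectively $\Sym_{k,0}$) reduces to the identity, since $S_{0,k}$ contains only the identity permutation. With the convention that $\TenDeriv{0}$ is the function itself, the same entrywise computation applies. I would also note that reshapes commute with the sum over $\ell$; this is immediate entrywise, and I expect no genuine obstacle beyond this bookkeeping.
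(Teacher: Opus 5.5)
Your proposal is correct and is essentially the paper's proof. You expand $u^\top g=\sum_{\ell} u_\ell g_\ell$, apply \eqref{EQ:ScalarProductRule} to each summand, pull the sum over $\ell$ through the linear operator $\Sym_{i,k-i}$, and identify $\sum_\ell \TenDeriv{i}u_\ell\otimes\TenDeriv{k-i}g_\ell$ with $\TenDeriv{i}u\tenp{1}{1}\TenDeriv{k-i}g$ via \Cref{REM:TensorDeriv}(i); your entrywise check of that identification and of the boundary cases $i=0$ and $i=k$ only makes explicit what the paper leaves implicit.
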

\begin{proof}
  Note that differentiation as well as the operator $\Sym_{i,j}$ are linear. By the first part of \Cref{REM:TensorDeriv}, it holds that
  \[
    \TenDeriv{k}g(\xi)[i,j_1,\dots,j_k]=\TenDeriv{k}g_i(\xi)[j_1,\dots,j_k].
  \]
  Thus we get that
  \begin{align*}
    \TenDeriv{k}(u^\top g) =&\; \TenDeriv{k}\left(\sum_{j=0}^{m} g_ju_j \right) = \sum_{j=0}^{m} \TenDeriv{k}\left( g_ju_j \right).
  \end{align*}
  Now using \cref{EQ:ScalarProductRule} we obtain
  \begin{align*}
    \TenDeriv{k}(u^\top g) &= \sum_{j=0}^{m} \sum_{i=0}^{k} \Sym_{i,k-i} \left( \TenDeriv{i}u_j \otimes \TenDeriv{k-i}g_j \right)= \sum_{i=0}^{k} \Sym_{i,k-i}\left( \TenDeriv{i}u \tenp{1}{1} \TenDeriv{k-i}g \right). 
  \end{align*}
\end{proof}
%
%%%%%%%%%%%%%%%%%%%%%%%%%%%%%%%%%%%%%%%%%%%%%%%%%%%%%%%%%%%%%%%%%%%%%%%%%%%%%%%
%%%%%%%%%%%%%%%%%%%%%%%%%%%%%%%%%%%%%%%%%%%%%%%%%%%%%%%%%%%%%%%%%%%%%%%%%%%%%%%
\section{Higher order extended Kalman filters}
%%%%%%%%%%%%%%%%%%%%%%%%%%%%%%%%%%%%%%%%%%%%%%%%%%%%%%%%%%%%%%%%%%%%%%%%%%%%%%%
%%%%%%%%%%%%%%%%%%%%%%%%%%%%%%%%%%%%%%%%%%%%%%%%%%%%%%%%%%%%%%%%%%%%%%%%%%%%%%%
%
Using the results from \Cref{LEM:ProductRule}, in this section, we derive a sequence of approximations to the Mortensen observer by successively taking derivatives of the HJB equation \eqref{eq: HJB} and assuming the derivative of a certain order to vanish. Let us demonstrate the main idea by exemplarily considering a second order approximation of the Mortensen observer. For this purpose, assume that we construct an approximation $\widehat{x}(t)\approx \widehat{x}_{\mathrm{M}}(t)$ by making the ansatz $\TenDeriv{3} \mathcal{V}(\cdot,\widehat{x}(\cdot)) = 0$. Denoting $P(t) = \nabla^2 \mathcal{V}(t,\widehat{x}(t))$, equations \eqref{eq: observerEq} and \eqref{eq: Hessian_V} completely describe the resulting observer as

\begin{equation}\label{EQ:SecondOrderObserver}
 \begin{aligned}
    \dot{\widehat{x}}(t) &= f(\widehat{x}(t)) 
	+ P(t)^{-1} \mathcal{J}_h(\widehat{x}(t))^\top Q (y(t) - h(\widehat{x}(t) ) ),
    \\
    \dot{P}(t) &=
    -\mathcal{J}_f(\widehat{x}(t))^\top  P(t)
    - P(t) \mathcal{J}_f(\widehat{x}(t)) 
    -  P(t) G R^{-1} G^\top  P(t)  \\
    &\quad + \mathcal{J}_h(\widehat{x}(t))^\top Q \mathcal{J}_h(\widehat{x}(t)) 
    - \TenDeriv{2} h (\widehat{x}(t))  \tenp{1}{1} Q (y(t)-h(\widehat{x}(t))), 
    \\
	\widehat{x}(0) &= x_0 \quad\text{and}\quad   P(0) = \Gamma.
  \end{aligned}
\end{equation}
As we can see from \eqref{eq: ext Kalman} and its discussion, the observer $\widehat{x}$ is in spirit similar to the extended Kalman filter and only differs in the term corresponding to the second derivative of $h$. Note that in order to define this observer we need to assume $f$ and $h$ to be $C^2$-regular and $\mathcal{V}$ to be $C^3$-regular. 

Analogously, as we have done for the second order approximation, let us now approximate the Mortensen observer by considering all derivatives of the value function of order up to some fixed $k\ge 3$, which is fixed for the remainder of the section. Hence, we neglect all terms containing $\TenDeriv{\ell} \mathcal{V}(t,\xi)$ for $\ell=k$. We will see that the derivatives of order larger than two are characterized by generalized differential Lyapunov equations. Differentiating the HJB equation requires the following assumptions.
\begin{assumption}~ \label{ass:smoothness}
    \begin{enumerate}
        \item The functions $f \colon \mathbb{R}^n \to \mathbb{R}^n$ and $h \colon \mathbb{R}^n \to \mathbb{R}^p$ are $k$-times continuously differentiable.
        \item For all $t \in [0,T]$ the value function $\mathcal{V}(t,\cdot)$ is $(k+1)$-times continuously differentiable.
        \item The value function and its spatial derivatives are $C^1$ in time and the order of derivatives can be exchanged, i.e., for $\ell \leq k $ it holds
        $ \partial_t \TenDeriv{\ell} \mathcal{V}(\cdot,\widehat{x}_\mathrm{M}(\cdot)) = \TenDeriv{\ell} \partial_t \mathcal{V}(\cdot,\widehat{x}_\mathrm{M}(\cdot)) $.
    \end{enumerate}
\end{assumption}
Two additional assumptions are required to ensure that the Mortensen observer is well-defined and satisfies \cref{eq: observerEq}.
\begin{assumption}~ \label{ass:mortensen}
    \begin{enumerate}
        \item For every $t \in [0,T]$ the Mortensen observer $\widehat{x}_\mathrm{M}(t)$ is well-defined as the unique minimizer of the value function $\mathcal{V}(t,\cdot)$.
        \item For all $t \in [0,T]$ the Hessian $\nabla^2 \mathcal{V}(t,\widehat{x}_\mathrm{M}(t))$ is a regular matrix.
    \end{enumerate}
\end{assumption}
Note that, while the assumptions on the regularity of the value function may seem restrictive, we point out that for quadratic dynamics with linear measurements the value function $\mathcal{V}$ has been shown to be locally smooth with a positive definite Hessian \cite{BreS24}.
Under these assumption we state the main theoretical result of this work.
\begin{theorem}\label{thm:main}
    Let \Cref{ass:smoothness} and \Cref{ass:mortensen} hold and additionally assume $\TenDeriv{k+1} \mathcal{V}(\cdot,\widehat{x}_\mathrm{M}(\cdot)) = 0$.
    Then $\widehat{x}_\mathrm{M}$ is characterized as the solution $\widehat{x}$ of 
    \begin{equation}\label{EQ:HigherOrderObserver}
    \begin{aligned}
        \dot{\widehat{x}}(t) 
        =& f(\widehat{x}(t)) + \mathbf{P}_2(t)^{-1} (\mathcal{J}_h(\widehat{x}(t)))^\top  (y(t) - h(\widehat{x}(t))), 
        \\
        \dot{\mathbf{P}}_2(t) 
        =& - \J_f(\widehat{x}(t))^\top  \mathbf{P}_2(t) - \mathbf{P}_2(t) \J_f(\widehat{x}(t)) - \mathbf{P}_2(t) GR^{-1}G^\top  \mathbf{P}_2(t)   - \mathbf{R}_2(t) 
        \\
        &\qquad - \mathbf{P}_3(t) \tenp{1}{1} \left(\mathbf{P}_2(t)^{-1} \left(\mathcal{J}_h(\widehat{x}(t))^\top Q\left(y(t) - h(\widehat{x}(t))\right)\right)\right), 
        \\
        \dot{\mathbf{P}}_j(t) 
        =& -\sum_{i=1}^{j} \mathbf{P}_j(t) \times_i \left(\J_f(\widehat{x}(t)) + G R^{-1} G^\top  \mathbf{P}_2(t) \right) - \mathbf{R}_j(t) 
        \\
        &\qquad - \mathbf{P}_{j+1}(t) \tenp{1}{1}\left(\mathbf{P}_2(t)^{-1} \left(\mathcal{J}_h(\widehat{x}(t))^\top Q\left(y(t) - h(\widehat{x}(t))\right)\right)\right),
        \\
        \dot{\mathbf{P}}_k(t)  
        =& -\sum_{i=1}^{k} \mathbf{P}_k(t) \times_i \left( \J_f(\widehat{x}(t)) + G R^{-1}G^\top  \mathbf{P}_2(t) \right) - \mathbf{R}_k(t),
    \end{aligned}
    \end{equation}
with the initial conditions
\[
\widehat{x}(0) = x_0, \qquad \mathbf{P}_2(0) = \Gamma
\quad \text{and} \quad \mathbf{P}_j(0) = 0 \quad\text{for all } j = 3,\dots,k
\]
where 
$\mathbf{R}_\ell$ is given by
\begin{equation}\label{eq:HOEKF_R}
    \begin{aligned}
        \mathbf{R}_\ell(t) &=
        \sum_{i=1}^{\ell-2} \Sym_{i,\ell-i}\left( \mathbf{P}_{i+1}(t)  \tenp{1}{1}\TenDeriv{\ell-i}f(\widehat{x}(t)) \right)
        \\
        &\qquad + \frac{1}{2}\sum_{i=2}^{\ell-2} \Sym_{i,\ell-i}\left( \mathbf{P}_{i+1} (t) \tenp{1}{1} \left( G R^{-1} G^\top \tenp{2}{1}\mathbf{P}_{\ell-i+1} (t) \right) \right)
        \\
        &\qquad + \TenDeriv{\ell} h(\widehat{x}(t)) \tenp{1}{1} Q  (y(t) - \widehat{x}(t))
        \\
        & \qquad - \frac{1}{2} \sum_{i=1}^{\ell -1} \Sym_{i,\ell-i} \left(\TenDeriv{i} h(\widehat{x}(t)) \tenp{1}{1} \left( Q \tenp{2}{1} \TenDeriv{\ell-i} h(\widehat{x}(t))\right) \right)
    \end{aligned}
\end{equation}
for $\ell= 2,\dots,k$.
\end{theorem}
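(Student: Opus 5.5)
The plan is to define $\mathbf{P}_\ell(t) \coloneqq \TenDeriv{\ell}\mathcal{V}(t,\widehat{x}_\mathrm{M}(t))$ for $\ell=2,\dots,k$. We then show that the tuple $(\widehat{x}_\mathrm{M},\mathbf{P}_2,\dots,\mathbf{P}_k)$ satisfies \eqref{EQ:HigherOrderObserver} with the stated initial data. Uniqueness of solutions of this ODE system, which holds by local Lipschitz continuity of the right-hand side given \Cref{ass:smoothness}, then yields the characterization.

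\textbf{Initial conditions and the $\widehat{x}$-equation.} By \Cref{ass:mortensen}, $\widehat{x}_\mathrm{M}$ satisfies \eqref{eq: observerEq} with $\mathbf{P}_2=\nabla^2\mathcal{V}(t,\widehat{x}_\mathrm{M}(t))$ invertible. Since $\mathcal{V}(0,\cdot)=\frac12\|\cdot-x_0\|_\Gamma^2$ is quadratic, we immediately get $\widehat{x}_\mathrm{M}(0)=x_0$, $\mathbf{P}_2(0)=\Gamma$, and $\mathbf{P}_j(0)=0$ for $j\ge 3$.

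\textbf{Differentiating the HJB equation.} We apply $\TenDeriv{\ell}$ to \eqref{eq: HJB} in $\xi$, using \Cref{LEM:ProductRule} for each term.
\begin{itemize}
\item For $\nabla\mathcal{V}^\top f$, take $u=\nabla\mathcal{V}$, so that $\TenDeriv{i}u=\TenDeriv{i+1}\mathcal{V}$.
\item For $\frac12\|G^\top\nabla\mathcal{V}\|^2_{R^{-1}}$, take $u=\nabla\mathcal{V}$ and $g=GR^{-1}G^\top\nabla\mathcal{V}$.
\item For $\frac12\|y-h\|_Q^2$, take $u=y-h$ and $g=Q(y-h)$.
\end{itemize}
Evaluating at $\xi=\widehat{x}_\mathrm{M}(t)$ kills every term containing $\nabla\mathcal{V}$ undifferentiated, since $\widehat{x}_\mathrm{M}(t)$ is a minimizer. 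We then sort the remaining terms.
\begin{itemize}
\item Terms with $\TenDeriv{\ell+1}\mathcal{V}$ contracted with $f$.
\item Terms with $\TenDeriv{\ell}\mathcal{V}$ paired with $\J_f$ or with $GR^{-1}G^\top\mathbf{P}_2$. Together these produce $\sum_i \mathbf{P}_\ell\times_i(\J_f+GR^{-1}G^\top\mathbf{P}_2)$, once the $\Sym_{1,\ell-1}$/$\Sym_{\ell-1,1}$ reshapes are identified with the $\times_i$ operations using symmetry of $\TenDeriv{\ell}\mathcal{V}$.
\item The remaining lower-order terms, which assemble into $\mathbf{R}_\ell$. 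The $h$-part contributes $\TenDeriv{\ell}h\tenp{1}{1}Q(y-h)$ from the $i=0$ and $i=\ell$ summands, and the cross terms give the $-\frac12\sum$ term.
\end{itemize}
This is essentially the content of \Cref{LEM:HigherDerivativeHJB}, which I would state and prove first as a separate lemma.

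\textbf{Total time derivative along the observer.} By the chain rule and \Cref{ass:smoothness}(iii),
\[
\dot{\mathbf{P}}_\ell(t)=\partial_t\TenDeriv{\ell}\mathcal{V}(t,\widehat{x}_\mathrm{M}(t))+\TenDeriv{\ell+1}\mathcal{V}(t,\widehat{x}_\mathrm{M}(t))\tenp{1}{1}\dot{\widehat{x}}_\mathrm{M}(t).
\]
We substitute $\dot{\widehat{x}}_\mathrm{M}=f+\mathbf{P}_2^{-1}\J_h^\top Q(y-h)$. The $f$-part cancels exactly the term $\TenDeriv{\ell+1}\mathcal{V}\tenp{1}{1}f$ coming from the HJB derivative. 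What is left is the coupling $\mathbf{P}_{\ell+1}\tenp{1}{1}(\mathbf{P}_2^{-1}\J_h^\top Q(y-h))$, with the sign fixed by comparison with \eqref{eq: Hessian_V}. For $\ell=k$ this term contains $\TenDeriv{k+1}\mathcal{V}$ along $\widehat{x}_\mathrm{M}$, which vanishes by hypothesis, giving the closed last equation. For $\ell=2$ the formula reduces to \eqref{eq: Hessian_V}, providing a consistency check.

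\textbf{Main obstacle.} The hardest step is the bookkeeping of reshapes. One has to verify that the $\Sym$-sums from \Cref{LEM:ProductRule} split exactly into the $\times_i$ terms plus the index ranges in $\mathbf{R}_\ell$: $i=1,\dots,\ell-2$ for $f$, and $i=2,\dots,\ell-2$ for the quadratic term. The boundary summands $i=0,1,\ell-1,\ell$ must be checked individually, and the factor $\frac12$ must be seen to be absorbed by the two symmetric boundary terms of the quadratic form. I would handle this by writing out each index $i$ explicitly and exploiting full symmetry of $\TenDeriv{\ell}\mathcal{V}$ to identify permuted tensors.
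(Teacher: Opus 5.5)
Your proposal follows the paper's proof essentially step for step. You set $\mathbf{P}_\ell=\TenDeriv{\ell}\mathcal{V}(t,\widehat{x}_\mathrm{M}(t))$ and derive \Cref{LEM:HigherDerivativeHJB} via \Cref{LEM:ProductRule}; taking $u=y-h$, $g=Q(y-h)$ instead of expanding the square is only cosmetic. You then drop the terms with $\nabla\mathcal{V}(t,\widehat{x}_\mathrm{M}(t))=0$ and apply the chain rule, so that $\TenDeriv{\ell+1}\mathcal{V}\tenp{1}{1}f$ cancels. The two boundary quadratic terms merge (this is \Cref{LEM:HigherDerivativesCalculations}), and $\Sym_{\ell-1,1}$ becomes $\sum_i\times_i$ (this is \Cref{LEM:HigherDerivativesCalculations2}). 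Three points need correction.

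\emph{Uniqueness.} This step goes beyond the paper and does not follow from \Cref{ass:smoothness}. $\mathbf{R}_k$ contains $\TenDeriv{k}h(\widehat{x})\tenp{1}{1}Q\bigl(y-h(\widehat{x})\bigr)$. For $h\in C^k$ this is only continuous in $\widehat{x}$, so the right-hand side of \eqref{EQ:HigherOrderObserver} need not be locally Lipschitz. You would need something like $h\in C^{k+1}$, plus a Carath\'eodory argument since $y$ is only $L^2$ in time. The paper makes no uniqueness claim at all. Its proof only shows that $(\widehat{x}_\mathrm{M},\mathbf{P}_2,\dots,\mathbf{P}_k)$ solves the system, and well-posedness is explicitly left open. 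Either drop this step or strengthen the hypotheses.

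\emph{Sign of the coupling term.} Do not fix it ``by comparison''; the chain rule determines it. After the $f$-parts cancel, what remains is $+\mathbf{P}_{\ell+1}\tenp{1}{1}\bigl(\mathbf{P}_2^{-1}\J_h^\top Q(y-h)\bigr)$. This agrees with \eqref{eq: Hessian_V} and with the formula in the paper's own proof. The minus signs on this term in the displayed statement are a typo. So are $y-\widehat{x}$ in place of $y-h(\widehat{x})$ in $\mathbf{R}_\ell$ and the missing $Q$ in the $\widehat{x}$-equation. A correct derivation therefore cannot literally reproduce the statement as printed, and you should say so rather than hedge.

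\emph{The case $\ell=2$.} You claim that the $\TenDeriv{\ell}\mathcal{V}$-terms always assemble into $\sum_{i=1}^{\ell}\mathbf{P}_\ell\times_i(\J_f+GR^{-1}G^\top\mathbf{P}_2)$. This holds only for $\ell\ge 3$, where the summands $i=1$ and $i=\ell-1$ of the quadratic sum are distinct and together cancel the factor $\frac12$. For $\ell=2$ they are one and the same summand. The $\times_i$ formula would then give $-2\mathbf{P}_2GR^{-1}G^\top\mathbf{P}_2$ instead of the correct $-\mathbf{P}_2GR^{-1}G^\top\mathbf{P}_2$. This is why the Riccati equation is stated, and must be derived, separately. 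Your ``consistency check'' at $\ell=2$ holds only for the unsimplified differentiated HJB equation, not for the $\times_i$ form.
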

Before proving the theorem some remarks are in order.
\begin{remark}\label{rem:HOEKF}
    The assumption of a vanishing $(k+1)$ order derivative $\TenDeriv{k+1} \mathcal{V}(\cdot,\widehat{x}_\mathrm{M}(\cdot))$ can generally not be expected to hold. Hence, in a general setting, the trajectory $\widehat{x}$ obtained from \eqref{EQ:HigherOrderObserver} is to be understood as an approximation rather than an analytic representation of the Mortensen observer $\widehat{x}_\mathrm{M}$.
    We call the resulting observer the \textbf{higher order extended Kalman filter of order $k$} or short \emph{HOEKF}-$k$.
\end{remark}
Motivated by the generalized Lyapunov equations in \cite{BreKP19} we refer to the equations for $\mathbf{P}_j$ and $\mathbf{P}_k$ as \emph{generalized differential Lyapunov equations}.
The higher order extended Kalman filter defined by \eqref{EQ:HigherOrderObserver}-\eqref{eq:HOEKF_R} consists of a feedback equation, a Riccati-like equation and $k-2$ generalized differential Lyapunov equations, all of which are coupled. 
Note that, when applied to linear systems, i.e., $f(\xi) = A \xi$ and $h(\xi) = C \xi$, for all $k \geq 2$ the HOEKF-$k$ reduces to the Kalman filter
\begin{equation}\label{eq:KF}
\begin{alignedat}{2}
    \dot{\widehat{x}}_\mathrm{K}(t) &= A \widehat{x}_\mathrm{K}(t) + \Sigma(t) C^\top Q (y(t) - C\widehat{x}_\mathrm{K}(t)),
    \qquad \qquad
    &&\widehat{x}_\mathrm{K}(0) = x_0,\\
    \dot{\Sigma}(t) &= A \Sigma(t) + \Sigma(t) A^\top - \Sigma(t) C^\top Q C \Sigma(t) + G R^{-1} G^\top,
    \qquad
    &&\Sigma(0) = \Gamma^{-1}.
\end{alignedat}
\end{equation}
This can be seen by observing that for linear systems it holds $\mathbf{R}_3(t) = 0$ and  
\begin{equation*}
    \mathbf{R}_j(t) 
    =
    \frac{1}{2} \sum_{i=2}^{j-2} \Sym_{i,j-i} \left( \mathbf{P}_{i+1}(t) \tenp{1}{1} \left( GG^\top \tenp{2}{1} \mathbf{P}_{j-i+1}(t) \right) \right),
    \qquad j\geq 4. 
\end{equation*}
Consequently, a solution $(\widehat{x},(\mathbf{P}_j)_{j=1}^k)$ to \eqref{EQ:HigherOrderObserver} is given by $\widehat{x} = \widehat{x}_\mathrm{K}$, $\mathbf{P}_2 = \Sigma^{-1}$, and  $\mathbf{P}_j = 0$, $j \geq 3$, with $\widehat{x}_\mathrm{K}$ and $\Sigma$ solving \eqref{eq:KF}.
\begin{remark}[Existence of solutions]~\\
At first glance, the structure of the generalized differential Lyapunov equations seems quite appealing. Indeed, it is similar to the equations appearing in \cite{BreKP19} for which the authors show existence of solutions. In the present setting, the coupling and the matrix inverse introduce additional challenges prohibiting an analogous treatment. The analytical investigation regarding well-posedness of the observer hence requires a novel approach and is left for future work.

\end{remark}

\begin{remark}
  There are numerous ways to modify the previous considerations.
      E.g., in \cite{Fle97}, the author considers the cost functional
      \[
        \tilde{J}(t,\xi,v) = \phi(x(0)) + \frac{1}{2}\int_{0}^{t}\norm{v(s)}^2 + \alpha\norm{\tilde{y}(s)-Cx(s)}^2~\mathrm{d} s.
      \]
      Using this cost functional, we end up with different initial conditions for the equations in \eqref{EQ:HigherOrderObserver}. In particular, the corresponding initial condition to the HJB equation \eqref{eq: HJB} would take the form $\mathcal{V}(0,\xi) = \phi(\xi)$. Hence, by assuming sufficient smoothness of $\phi$, the initial conditions of the $\mathbf{P}_i$ would take the form $\mathbf{P}_i(0) = \TenDeriv{i}\phi(x_0)$ for $i\ge 2$.
\end{remark}

The remainder of this section is concerned with the proof of \Cref{thm:main}.
As a first step we differentiate both sides of the HJB equation \eqref{eq: HJB} with respect to $\xi$.
\begin{lemma}\label{LEM:HigherDerivativeHJB}
    Let \Cref{ass:smoothness} hold. Then for all $3 \leq j \leq k$ it holds
    \begin{equation}
    \begin{aligned}\label{EQ:HigherDerivativeHJB}
        \partial_t \TenDeriv{j} \mathcal{V}(t,\xi) &= -\sum_{i=0}^{j} \Sym_{i,j-i}\left( \TenDeriv{i+1} \mathcal{V}(t,\xi) \tenp{1}{1}\TenDeriv{j-i}f(\xi) \right)
        \\ 
        &\qquad - \frac{1}{2}\sum_{i=0}^{j} \Sym_{i,j-i}\left( \TenDeriv{i+1} \mathcal{V}(t,\xi) \tenp{1}{1} \left( G R^{-1}G^\top \tenp{2}{1}\TenDeriv{j-i+1} \mathcal{V}(t,\xi) \right) \right)
        \\
        &\qquad - \TenDeriv{j} h(\xi) \tenp{1}{1} Q y(t) 
        \\
        & \qquad + \frac{1}{2} \sum_{i=0}^{j} \Sym_{i,j-i} \left(\TenDeriv{i} h(\xi) \tenp{1}{1} \left(Q \tenp{2}{1} \TenDeriv{j-i} h(\xi)\right) \right).
    \end{aligned}
    \end{equation}
\end{lemma}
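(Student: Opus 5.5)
The plan is to apply $\TenDeriv{j}$ to both sides of the HJB equation \eqref{eq: HJB} term by term, and to treat each term as a contraction of two vector-valued functions so that \Cref{LEM:ProductRule} applies directly. For the left-hand side, \Cref{ass:smoothness}(iii) lets us exchange the spatial and temporal derivatives, which gives $\TenDeriv{j}\partial_t\mathcal{V} = \partial_t \TenDeriv{j}\mathcal{V}$. The right-hand side splits into four contributions: the drift term, the quadratic term in $\nabla\mathcal{V}$, the cross term in $y$, and the quadratic term in $h$.

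\emph{Drift term.} Write $\nabla\mathcal{V}(t,\xi)^\top f(\xi) = u^\top g$ with $u = \TenDeriv{1}\mathcal{V}(t,\cdot)$ and $g = f$. By \Cref{LEM:ProductRule},
\[
\TenDeriv{j}(u^\top g) = \sum_{i=0}^j \Sym_{i,j-i}\bigl(\TenDeriv{i}u \tenp{1}{1}\TenDeriv{j-i}f\bigr).
\]
We then identify $\TenDeriv{i}u$ with $\TenDeriv{i+1}\mathcal{V}$. The first index of $\TenDeriv{i}u$ is the gradient component, which matches the convention $\TenDeriv{i}g[\,\cdot\,,j_1,\dots,j_i]$. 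The remaining indices are consistent by Schwarz's theorem, since $\mathcal{V}(t,\cdot)\in C^{k+1}$ and $j+1\le k+1$. The required regularity of $f$, namely $C^j$ with $j\le k$, is provided by \Cref{ass:smoothness}(i).

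\emph{Quadratic term in $\nabla\mathcal{V}$.} Write $\frac12\|G^\top\nabla\mathcal{V}\|_{R^{-1}}^2 = \frac12 u^\top g$ with $u=\nabla\mathcal{V}$ and $g = GR^{-1}G^\top\nabla\mathcal{V}$. Since $GR^{-1}G^\top$ is a constant matrix, $\TenDeriv{m}g = GR^{-1}G^\top \tenp{2}{1}\TenDeriv{m+1}\mathcal{V}$, where the contraction acts on the first, vector index of $\TenDeriv{m+1}\mathcal{V}$. This derivative needs $\mathcal{V}\in C^{j+1}$, which again follows from \Cref{ass:smoothness}(ii). Inserting this into \Cref{LEM:ProductRule} yields the second sum.

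\emph{Terms in $y$ and $h$.} Expand
\[
\tfrac12\|y-h\|_Q^2 = \tfrac12 y^\top Qy - y^\top Q h + \tfrac12 h^\top Q h .
\]
The first summand does not depend on $\xi$, so its derivative vanishes because $j\ge 1$. The second summand is linear in $h$ and gives $-\TenDeriv{j}h\tenp{1}{1}Qy$; this is exactly where $j\ge 1$ is used. For the third, apply \Cref{LEM:ProductRule} with $u=h$ and $g=Qh$, and use $\TenDeriv{m}(Qh) = Q\tenp{2}{1}\TenDeriv{m}h$.

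Collecting the four contributions with their signs from \eqref{eq: HJB} gives \eqref{EQ:HigherDerivativeHJB}.

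The main obstacle is the bookkeeping of tensor indices. One must check that the ordering conventions make $\TenDeriv{i}(\nabla\mathcal{V})$ coincide with $\TenDeriv{i+1}\mathcal{V}$ with the contracted index in first position. One must also check that pulling a constant matrix out of $\TenDeriv{m}$ produces $\tenp{2}{1}$ contractions with dimension ordering compatible with the subsequent $\tenp{1}{1}$ and the reshapes in $\Sym_{i,j-i}$. I would verify both points entrywise from the definitions of contraction and reshape, using the symmetry of mixed partial derivatives.
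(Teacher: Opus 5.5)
Your proposal is correct and follows the paper's proof: you differentiate each summand of \eqref{eq: HJB} with \Cref{LEM:ProductRule}, identify $\TenDeriv{i}(\nabla\mathcal{V})$ with $\TenDeriv{i+1}\mathcal{V}$, pull the constant matrices $GR^{-1}G^\top$ and $Q$ out as $\tenp{2}{1}$ contractions, and expand $\|y-h\|_Q^2$ to handle the output term. Invoking \Cref{ass:smoothness}(iii) to swap $\partial_t$ and $\TenDeriv{j}$ makes explicit a step the paper leaves implicit; the identification $\TenDeriv{i}(\nabla\mathcal{V})=\TenDeriv{i+1}\mathcal{V}$ actually holds directly from the index convention, so Schwarz's theorem is not needed for it.
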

\begin{proof}
  We differentiate the three summands of \eqref{eq: HJB} separately using \Cref{LEM:ProductRule}. 
  For the first term, we obtain
  \begin{align*}
    \TenDeriv{j} \left(  \nabla \mathcal{V}(t,\xi)^\top f(\xi)\right) =& \sum_{i=0}^{j} \Sym_{i,j-i} \left( \TenDeriv{i} \left( \nabla \mathcal{V}(t,\xi) \right) \tenp{1}{1} \TenDeriv{j-i}f(\xi) \right) \\
    =&  \sum_{i=0}^{j} \Sym_{i,j-i} \left( \TenDeriv{i+1} \mathcal{V}(t,\xi) \tenp{1}{1} \TenDeriv{j-i}f(\xi) \right).
  \end{align*}
  Similarly, for the second term, we see that
  \begin{align*}
    \TenDeriv{j}\bigl(\Vert G^\top \nabla \mathcal{V}(t,\xi) \Vert_{R^{-1}}^2 \bigr) &=
    \TenDeriv{j}\bigl( \nabla \mathcal{V}(t,\xi)^\top  (  G R^{-1}G^\top  \nabla \mathcal{V}(t,\xi) ) \bigr) \\
    &= \sum_{i=0}^{j} \Sym_{i,j-i} \left( \TenDeriv{i} \left( \nabla \mathcal{V}(t,\xi) \right) \tenp{1}{1} \TenDeriv{j-i}\left( G R^{-1}G^\top  \nabla \mathcal{V}(t,\xi) \right)  \right)\\
    &= \sum_{i=0}^{j}\Sym_{i,j-i} \left( \TenDeriv{i+1}  \mathcal{V}(t,\xi) \tenp{1}{1} \left(G R^{-1}G^\top \tenp{2}{1}\TenDeriv{j-i+1} \mathcal{V}(t,\xi) \right)  \right).
  \end{align*}
  Finally differentiating the third term, we obtain
  \begin{align*}
      \TenDeriv{j}\left(\Vert y(t) - h(\xi) \Vert_{Q}^2\right) 
      &= \TenDeriv{j}\left( y(t)^T Q y(t) -2  h(\xi)^\top Q y(t) + h(\xi)^\top Q h(\xi) \right)
      \\
      &= \TenDeriv{j}\left( y(t)^T Q y(t)\right) -2  \TenDeriv{j}\left(h(\xi)^\top Q y(t)\right) + \TenDeriv{j}\left(h(\xi)^\top Q h(\xi) \right)
      \\ 
      &= -2   \TenDeriv{j} h(\xi) \tenp{1}{1} Q y(t) 
      \\
      & \qquad + \sum_{i=0}^{j} \Sym_{i,j-i} \left(\TenDeriv{i} h(\xi) \tenp{1}{1} \left(Q \tenp{2}{1} \TenDeriv{k-i} h(\xi)\right) \right).
  \end{align*}
\end{proof}

We further require the following two technical lemmas. 
\begin{lemma}\label{LEM:HigherDerivativesCalculations}
  Let $\mathbf{A}\in \bigotimes^{j+1}\R^n$ and $\mathbf{B}\in \bigotimes^{\ell+1}\R^n$ be symmetric, and let $\Gamma=\Gamma^\top \in\R^{n,n}$ be a symmetric matrix. Then it holds that
      \begin{equation}\label{EQ:SymmetrieHJBValueFuncTerms}
        \Sym_{j,\ell}\left(\mathbf{A} \tenp{1}{1}\left(\Gamma\tenp{2}{1} \mathbf{B}\right)\right) = \Sym_{\ell,j}\left(\mathbf{B} \tenp{1}{1}\left(\Gamma\tenp{2}{1} \mathbf{A}\right)\right).
      \end{equation}
\end{lemma}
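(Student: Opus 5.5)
The plan is to reduce the identity to two facts: the tensor inside the right-hand $\Sym$ is a block-transposition reshape of the tensor inside the left-hand $\Sym$, and this block transposition maps the index set $S_{\ell,j}$ bijectively onto $S_{j,\ell}$. Set
\[
\mathbf{T}\coloneqq \mathbf{A} \tenp{1}{1}\left(\Gamma\tenp{2}{1} \mathbf{B}\right)\in \textstyle\bigotimes^{j+\ell}\R^n,
\qquad
\mathbf{T}'\coloneqq \mathbf{B} \tenp{1}{1}\left(\Gamma\tenp{2}{1} \mathbf{A}\right)\in \textstyle\bigotimes^{\ell+j}\R^n .
\]

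\emph{Step 1: entrywise form.} Unfolding the contraction definition gives
\[
\mathbf{T}[a_1,\dots,a_j,b_1,\dots,b_\ell]=\sum_{p,m=1}^n \mathbf{A}[p,a_1,\dots,a_j]\,\Gamma[p,m]\,\mathbf{B}[m,b_1,\dots,b_\ell].
\]
In the same way,
\[
\mathbf{T}'[b_1,\dots,b_\ell,a_1,\dots,a_j]=\sum_{m,p}\mathbf{B}[m,b_1,\dots,b_\ell]\,\Gamma[m,p]\,\mathbf{A}[p,a_1,\dots,a_j].
\]
Since $\Gamma[m,p]=\Gamma[p,m]$, this yields $\mathbf{T}'[b,a]=\mathbf{T}[a,b]$. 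In reshape notation this reads $\mathbf{T}'=\mathbf{T}^\tau$, where $\tau\in S_{\ell+j}$ is the block swap $\tau(r)=j+r$ for $r\le \ell$ and $\tau(\ell+r)=r$ for $r\le j$. I will double-check this direction against the paper's convention $\mathbf{A}^\sigma[i_1,\dots]=\mathbf{A}[i_{\sigma(1)},\dots]$. Note that only the symmetry of $\Gamma$ enters here; the symmetry of $\mathbf{A}$ and $\mathbf{B}$ is not needed for this identity.

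\emph{Step 2: composing reshapes.} From the paper's definition,
\[
(\mathbf{T}^\tau)^\sigma[i_1,\dots,i_d]=\mathbf{T}^\tau[i_{\sigma(1)},\dots,i_{\sigma(d)}]=\mathbf{T}[i_{\sigma(\tau(1))},\dots,i_{\sigma(\tau(d))}],
\]
so $(\mathbf{T}^\tau)^\sigma=\mathbf{T}^{\sigma\circ\tau}$. Consequently
\[
\Sym_{\ell,j}(\mathbf{T}')=\sum_{\sigma\in S_{\ell,j}}\mathbf{T}^{\sigma\circ\tau}.
\]

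\emph{Step 3: bijection of index sets.} It remains to show that $\sigma\mapsto\sigma\circ\tau$ is a bijection $S_{\ell,j}\to S_{j,\ell}$. For $\sigma\in S_{\ell,j}$, the values $(\sigma\circ\tau)(1),\dots,(\sigma\circ\tau)(j)$ equal $\sigma(\ell+1),\dots,\sigma(\ell+j)$, which are increasing. The values $(\sigma\circ\tau)(j+1),\dots,(\sigma\circ\tau)(j+\ell)$ equal $\sigma(1),\dots,\sigma(\ell)$, also increasing. Hence $\sigma\circ\tau\in S_{j,\ell}$. The map is injective, and the inverse map $\rho\mapsto\rho\circ\tau^{-1}$ sends $S_{j,\ell}$ into $S_{\ell,j}$ by the same argument. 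Alternatively, both sets have cardinality $\binom{j+\ell}{j}$.

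Reindexing the sum then gives $\Sym_{\ell,j}(\mathbf{T}')=\sum_{\rho\in S_{j,\ell}}\mathbf{T}^\rho=\Sym_{j,\ell}(\mathbf{T})$, which is \eqref{EQ:SymmetrieHJBValueFuncTerms}.

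The only real pitfall is bookkeeping with the paper's reshape convention. The composition rule may come out as $\tau\circ\sigma$ rather than $\sigma\circ\tau$, and $\mathbf{T}'$ may equal $\mathbf{T}^{\tau^{-1}}$ rather than $\mathbf{T}^\tau$. I will verify both on a small case such as $j=1,\ell=2$ before writing the argument out. If the composition does turn out to be $\tau\circ\sigma$, the bijection claim must be re-checked for that form, since $\tau\circ\sigma$ has no evident reason to preserve the two increasing blocks.
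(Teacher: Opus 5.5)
Your argument is the paper's proof run in the opposite direction. The paper expands $\Sym_{j,\ell}$ of the left-hand tensor entrywise, uses $\Gamma_{sr}=\Gamma_{rs}$ to read each summand as an entry of $\mathbf{B}\tenp{1}{1}(\Gamma\tenp{2}{1}\mathbf{A})$ at block-swapped indices, and then reindexes via $S_{\ell,j}=\{\sigma\circ\tau \mid \sigma\in S_{j,\ell}\}$ with $(\tau(1),\dots,\tau(j+\ell))=(j+1,\dots,j+\ell,1,\dots,j)$. You instead write $\mathbf{T}'$ as a reshape of $\mathbf{T}$ and reindex $S_{\ell,j}\to S_{j,\ell}$. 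Your composition rule $(\mathbf{T}^\tau)^\sigma=\mathbf{T}^{\sigma\circ\tau}$ is correct. You are also right that the symmetry of $\mathbf{A}$ and $\mathbf{B}$ is never used; the paper does not use it either.

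The pitfall you flagged does occur in Step 1, so the proposal as written has an error. The permutation you define is exactly the paper's $\tau$, and that $\tau$ satisfies $\mathbf{T}=(\mathbf{T}')^{\tau}$. Hence $\mathbf{T}'=\mathbf{T}^{\tau^{-1}}$, which differs from $\mathbf{T}^{\tau}$ whenever $j\neq\ell$.

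Take $j=1$, $\ell=2$. Then $\mathbf{T}'[i_1,i_2,i_3]=\mathbf{T}[i_3,i_1,i_2]$, so the required permutation is $(3,1,2)$, whereas your $\tau=(2,3,1)$. With your $\tau$, Step 3 fails as well: $\mathrm{id}\in S_{2,1}$, but $\mathrm{id}\circ\tau=(2,3,1)\notin S_{1,2}$.

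The fix is to use $\rho=\tau^{-1}$, defined by $\rho(r)=\ell+r$ for $r\le j$ and $\rho(j+r)=r$ for $r\le \ell$. Then $\mathbf{T}'=\mathbf{T}^{\rho}$. Your Step 3 claim that $(\sigma\circ\tau)(1),\dots,(\sigma\circ\tau)(j)$ equal $\sigma(\ell+1),\dots,\sigma(\ell+j)$ is in fact the computation for $\rho$; it is false for the $\tau$ you wrote down. With $\tau$ replaced by $\rho$ throughout, Steps 1--3 are consistent and the proof is complete.
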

\begin{proof}
  We rewrite the first expression as
  \begin{align*}
    &\Sym_{j,\ell}\left(\mathbf{A} \tenp{1}{1}\left(\Gamma\tenp{2}{1} \mathbf{B}\right)\right)[i_1,\dots,i_{j+\ell}]\\ =& \sum_{\sigma\in S_{j,\ell}} \sum_{s=1}^{n} \mathbf{A}[s,i_{\sigma(1)},\dots,i_{\sigma(j)}] \sum_{r=1}^{n} \Gamma_{sr}\mathbf{B}[r,i_{\sigma(j+1)},\dots,i_{\sigma(j+\ell)}]\\
    =& \sum_{\sigma\in S_{j,\ell}} \sum_{r=1}^{n} \mathbf{B}[r,i_{\sigma(j+1)},\dots,i_{\sigma(j+\ell)}]  \sum_{s=1}^{n}  \Gamma_{sr} \mathbf{A}[s,i_{\sigma(1)},\dots,i_{\sigma(j)}] \\
    =& \sum_{\sigma\in S_{j,\ell}} \left( \mathbf{B} \tenp{1}{1} \left( \Gamma\tenp{2}{1} \mathbf{A} \right) \right)[i_{\sigma(j+1)},\dots,i_{\sigma(j+\ell)},i_{\sigma(1)},\dots,i_{\sigma(j)}].
  \end{align*}
  Defining the permutation $\tau$ as
  \[
    (\tau(1),\dots,\tau(j+\ell)) = (j+1,\dots,j+\ell,1,\dots,j),
  \]
  it remains to show that
  \[
    S_{\ell,j} = \left\{ \sigma\circ\tau \mid \sigma\in S_{j,\ell} \right\}.
  \]
  To that end let $\tilde{\sigma}=\sigma\circ\tau \in \left\{ \sigma\circ\tau \mid \sigma\in S_{j,\ell} \right\}$. It then follows
  \begin{align*}
    \tilde{\sigma}(1) &= \sigma(\tau(1)) = \sigma(j+1) < \dots < \sigma(j+\ell) =\sigma(\tau(\ell)) = \tilde{\sigma}(\ell) \\
    \tilde{\sigma}(\ell+1) &= \sigma(\tau(\ell+1)) = \sigma(1) < \dots < \sigma(j) =\sigma(\tau(j+\ell)) = \tilde{\sigma}(j+\ell)
  \end{align*}
  and therefore $\tilde{\sigma}\in S_{\ell,j}$. Note that also $|S_{j,\ell}|=|S_{\ell,j}|<\infty$ and hence, equality between the two sets of permutations follows.
\end{proof}

\begin{lemma}\label{LEM:HigherDerivativesCalculations2}
  Let $\mathbf{A}\in \bigotimes^\ell\R^n$ be symmetric and $\mathbf{B} \in \R^{n,n}$. Then it holds that
  \begin{equation*}
      \Sym_{\ell-1,1}\left( \mathbf{A} \tenp{1}{1} \mathbf{B} \right) = \sum_{j=1}^{\ell} \mathbf{A} \times_j \mathbf{B}.
  \end{equation*}
\end{lemma}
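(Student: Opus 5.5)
We prove the identity entrywise by expanding the left-hand side over the index set $S_{\ell-1,1}$ and matching each permutation with one term $\mathbf{A}\times_j\mathbf{B}$ on the right. First we fix the conventions. For $\mathbf{A}\in\bigotimes^\ell\R^n$ and $\mathbf{B}\in\R^{n,n}$, the contraction $\mathbf{C}\coloneqq \mathbf{A}\tenp{1}{1}\mathbf{B}$ is an order-$\ell$ tensor with entries
\[
\mathbf{C}[i_1,\dots,i_{\ell-1},i_\ell] = \sum_{s=1}^n \mathbf{A}[s,i_1,\dots,i_{\ell-1}]\,\mathbf{B}[s,i_\ell].
\]
Its first $\ell-1$ slots come from $\mathbf{A}$ and its last slot comes from $\mathbf{B}$.

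Next we describe $S_{\ell-1,1}$. A permutation $\sigma$ in this set is increasing on $\{1,\dots,\ell-1\}$, so it is determined by the single value $\sigma(\ell)=j$ for some $j\in\{1,\dots,\ell\}$. Then $(\sigma(1),\dots,\sigma(\ell-1))$ lists $1,\dots,\ell$ in increasing order with $j$ omitted. This gives the bijection $S_{\ell-1,1}\leftrightarrow\{1,\dots,\ell\}$, consistent with $|S_{\ell-1,1}|=\ell$. Using the paper's convention $\mathbf{C}^\sigma[i_1,\dots,i_\ell]=\mathbf{C}[i_{\sigma(1)},\dots,i_{\sigma(\ell)}]$, the term belonging to $j$ is
\[
\mathbf{C}^{\sigma_j}[i_1,\dots,i_\ell] = \sum_{s=1}^n \mathbf{A}[s,i_1,\dots,i_{j-1},i_{j+1},\dots,i_\ell]\,\mathbf{B}[s,i_j].
\]

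Now we compare this with $\mathbf{A}\times_j\mathbf{B}$. By definition,
\[
(\mathbf{A}\times_j\mathbf{B})[i_1,\dots,i_\ell]=\sum_{s=1}^n \mathbf{A}[i_1,\dots,i_{j-1},s,i_{j+1},\dots,i_\ell]\,\mathbf{B}[s,i_j].
\]
Here the summation index sits in slot $j$ of $\mathbf{A}$, whereas in $\mathbf{C}^{\sigma_j}$ it sits in slot $1$. This is where the symmetry of $\mathbf{A}$ enters. Since $\mathbf{A}$ is invariant under every permutation of its indices, we may move $s$ to the front without changing the entry. Hence $\mathbf{C}^{\sigma_j}=\mathbf{A}\times_j\mathbf{B}$ for every $j$, and summing over $j=1,\dots,\ell$ gives the claim.

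The main point needing care is the bookkeeping of the reshape convention. We must confirm that the paper's $\reshape_\sigma$, and not the inverse convention $\widetilde{\reshape}$, places $i_j$ in the $\mathbf{B}$-slot. If the inverse convention applied instead, the index sets would still biject, because $\sigma\mapsto\sigma^{-1}$ permutes $S_{\ell-1,1}$ up to relabeling, and the symmetry of $\mathbf{A}$ would absorb the remaining ordering. So the identity holds under either reading.
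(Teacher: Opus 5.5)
Your main argument is correct and is essentially the paper's proof. You parametrize $S_{\ell-1,1}$ by $j=\sigma(\ell)$ and write the $j$-th term as $(\mathbf{A}\tenp{1}{1}\mathbf{B})[i_1,\dots,i_{j-1},i_{j+1},\dots,i_\ell,i_j]$. The symmetry of $\mathbf{A}$ then moves the summation index into slot $j$, which gives $\mathbf{A}\times_j\mathbf{B}$.

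Your last paragraph, however, is false and should be dropped.
\begin{itemize}
\item The map $\sigma\mapsto\sigma^{-1}$ does not preserve $S_{\ell-1,1}$. For $j\le\ell-2$, the inverse of the shuffle with $\sigma(\ell)=j$ is $(1,\dots,j-1,\ell,j,\dots,\ell-1)$ in one-line notation, which is not increasing on its first $\ell-1$ entries.
\item Under $\widetilde{\reshape}_\sigma=\reshape_{\sigma^{-1}}$, the $\mathbf{B}$-slot receives $i_{\sigma^{-1}(\ell)}$, and this equals $i_{\ell-1}$ for every $j<\ell$.
\item Symmetry of $\mathbf{A}$ only permutes the $\mathbf{A}$-slots, so it cannot fix this. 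With the inverse convention the left-hand side would be $(\ell-1)\,\mathbf{A}\times_{\ell-1}\mathbf{B}+\mathbf{A}\times_\ell\mathbf{B}$, which differs from $\sum_{j}\mathbf{A}\times_j\mathbf{B}$ for $\ell\ge 3$.
\end{itemize}
So the identity genuinely depends on the paper's choice of $\reshape_\sigma$. That is the same choice that makes the generalized product rule come out right.
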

\begin{proof}
  First note that
  \[
    S_{\ell-1,1}= \left\{ (1,\dots,\ell),(1,3,\dots,\ell,2),\dots,(1,\dots,\ell-2,\ell,\ell-1) \right\}.
  \]
 By symmetry of $\mathbf{A}$ we have $\mathbf{A}*_{1,1}\mathbf{B}=\mathbf{A}*_{j,1}\mathbf{B}$ for all $j=1,\dots \ell$ and by writing out the product, it follows
\begin{align*}
    \Sym_{\ell-1,1}\left( \mathbf{A} \tenp{1}{1} \mathbf{B} \right)[i_1,\dots,i_\ell] =&  \sum_{\sigma\in S_{\ell-1,1}} \left( \mathbf{A} \tenp{1}{1} \mathbf{B} \right)[i_{\sigma(1)},\dots,i_{\sigma(\ell)}] \\
    =& \sum_{j=1}^{\ell}\left( \mathbf{A} \tenp{1}{1} \mathbf{B} \right)[i_1,\dots,i_{j-1},i_{j+1},\dots,i_\ell,i_j]\\
    =& \sum_{j=1}^{\ell} \left(\mathbf{A} \times_j \mathbf{B} \right)[i_1,\dots,i_\ell],
\end{align*}
which concludes the proof.
\end{proof}
With these results at hand we are in a position to prove the main result of this work.\\ 
\textbf{Proof of \Cref{thm:main}}
\begin{proof}
We denote $\mathbf{P}_2(t) = \nabla^2 \mathcal{V}(t,\widehat{x}_\mathrm{M}(t))$ and for $j=3,\dots,k+1$ we denote $ \mathbf{P}_j(t)\coloneqq\TenDeriv{j} \mathcal{V}(t,\widehat{x}_\mathrm{M}(t)) $. 
With \Cref{ass:smoothness} and \Cref{ass:mortensen}, a straightforward extension of the arguments presented in \cite[Sec.~2.1]{BreKu21} shows that $\widehat{x}_\mathrm{M}$ satisfies the observer equation 
\begin{equation}\label{eq:obsProof}
    \dot{\widehat{x}}_\mathrm{M}(t) 
        = f(\widehat{x}_{\mathrm{M}}(t)) + \mathbf{P}_2(t)^{-1} (\mathcal{J}_h(\widehat{x}_{\mathrm{M}}(t)))^\top  (y(t) - h(\widehat{x}_{\mathrm{M}}(t))).
\end{equation}
Taking two $\xi$ derivatives on both sides of the HJB equation \eqref{eq: HJB} and evaluating along $\xi = \widehat{x}_{\mathrm{M}}(t)$ yields the announced differential matrix Riccati-type equation characterizing $\mathbf{P}_2$. Here, some terms vanish due to the fact that $\nabla \mathcal{V}(t,\widehat{x}_{\mathrm{M}}(t))=0$. To see this, note that by definition $\widehat{x}_\mathrm{M}(t)$ minimizes $\mathcal{V}(t,\cdot)$.
Now, for $j=3,\dots,k$ the chain rule yields 
\begin{equation*}
    \dot{ \mathbf{P}}_j(t)
    =
   \frac{\mathrm{d}}{\mathrm{d}t} \left[ \TenDeriv{j} \mathcal{V}(t,\widehat{x}_{\mathrm{M}}(t)) \right]
    =
    \partial_t  \TenDeriv{j} \mathcal{V}(t,\widehat{x}_{\mathrm{M}}(t)) + \TenDeriv{j+1} \mathcal{V}(t,\widehat{x}_{\mathrm{M}}(t)) \tenp{1}{1} \dot{\widehat{x}}_{\mathrm{M}}(t).
\end{equation*}
Utilizing \Cref{LEM:HigherDerivativesCalculations}, \eqref{eq:obsProof}, and $\nabla \mathcal{V}(t,\widehat{x}_{\mathrm{M}}(t))=0$ yields 
\begin{align*}
    \dot{\mathbf{P}}_j(t) 
    &= \mathbf{P}_{j+1}(t) \tenp{1}{1} \left(\mathbf{P}_2(t)^{-1} \left(\mathcal{J}_h(\widehat{x}_{\mathrm{M}}(t))^\top Q\left(y(t) - h(\widehat{x}_{\mathrm{M}}(t))\right)\right)\right) 
    \\
    &\qquad  - \Sym_{j-1,1}\left( \mathbf{P}_{j}(t)  \tenp{1}{1} \left( \TenDeriv{1}f(\widehat{x}_{\mathrm{M}}(t))+  G R^{-1}G^\top \tenp{2}{1}\mathbf{P}_{2}(t)  \right) \right)
    \\
    &\qquad -\sum_{i=1}^{j-2} \Sym_{i,j-i}\left( \mathbf{P}_{i+1}(t)  \tenp{1}{1}\TenDeriv{j-i}f(\widehat{x}_{\mathrm{M}}(t)) \right)
    \\
    &\qquad - \frac{1}{2}\sum_{i=2}^{j-2} \Sym_{i,j-i}\left( \mathbf{P}_{i+1} (t) \tenp{1}{1} \left( G R^{-1} G^\top \tenp{2}{1}\mathbf{P}_{j-i+1} (t) \right) \right)
    \\
    &\qquad - \TenDeriv{j} h(\widehat{x}_{\mathrm{M}}(t)) \tenp{1}{1} Q ( y(t) - h(\widehat{x}_{\mathrm{M}}(t))
    \\
    & \qquad + \frac{1}{2} \sum_{i=1}^{j-1} \Sym_{i,j-i} \left(\TenDeriv{i} h(\widehat{x}_{\mathrm{M}}(t)) \tenp{1}{1} \left(Q \tenp{2}{1} \TenDeriv{j-i} h(\widehat{x}_{\mathrm{M}}(t))\right) \right).
\end{align*}
Finally, with \Cref{LEM:HigherDerivativesCalculations2} together with \eqref{eq:obsProof}, we obtain the asserted tensor equations.
The initial conditions again follow directly from the initial condition of the HJB equation \eqref{eq: HJB}.
\end{proof}

%
%%%%%%%%%%%%%%%%%%%%%%%%%%%%%%%%%%%%%%%%%%%%%%%%%%%%%%%%%%%%%%%%%%%%%%%%%%%%%%%
%%%%%%%%%%%%%%%%%%%%%%%%%%%%%%%%%%%%%%%%%%%%%%%%%%%%%%%%%%%%%%%%%%%%%%%%%%%%%%%
\section{Numerical experiments}\label{sec:Num}
%%%%%%%%%%%%%%%%%%%%%%%%%%%%%%%%%%%%%%%%%%%%%%%%%%%%%%%%%%%%%%%%%%%%%%%%%%%%%%%
%%%%%%%%%%%%%%%%%%%%%%%%%%%%%%%%%%%%%%%%%%%%%%%%%%%%%%%%%%%%%%%%%%%%%%%%%%%%%%%
%
To illustrate the analytical results and to showcase the improvements of higher order observers compared to the extended Kalman filter we implement the presented scheme for two examples, namely a low-dimensional chaotic system and a semilinear hyperbolic partial differential equation discretized in the spatial variables. We note that while these experiments justify the concept and motivate further research, the design of a sophisticated algorithm taking into account the structure of the equations \eqref{EQ:HigherOrderObserver} and scaling to systems of larger dimensions will be addressed in future work. 
The \Matlab~ code relies on the tensor toolbox \cite{BaKo25} and is available in \cite{BrRaSc26}. 
All experiments were run in \Matlab R2024b. The (higher order) extended Kalman filters are realized using the \Matlab~ routine \texttt{ode45} with relative tolerance $10^{-6}$ and absolute tolerance $10^{-8}$.
The results are compared to approximations of the Mortensen observer subsequently denoted as $\widehat{x}_\mathrm{M}$. For the low-dimensional example this approximation is obtained via an integration of the observer equation \eqref{eq: observerEq} using a BDF4 scheme. Evaluations of the Hessian $\nabla^2 \mathcal{V}$ are realized by treating the associated open-loop problem using gradient descent and subsequently solving a matrix Riccati equation, cf.~\cite[Subs.~3.1, Sec.~5]{BreKuSc23}, for the details.
This approach is not feasible for the system of dimension twelve resulting from the spatial discretization of the PDE. Instead, an approximation of the Mortensen observer is obtained using the scheme proposed in 
\cite{BreKuSc23}, see \Cref{subsubsec:WaveMort} for the details, and \cite{SchB24a} for the \Matlab~ code.

%
%%%%%%%%%%%%%%%%%%%%%%%%%%%%%%%%%%%%%%%%%%%%%%%%%%%%%%%%%%%%%%%%%%%%%%%%%%%%%%%
%%%%%%%%%%%%%%%%%%%%%%%%%%%%%%%%%%%%%%%%%%%%%%%%%%%%%%%%%%%%%%%%%%%%%%%%%%%%%%%
\subsection{Duffing oscillator}
%%%%%%%%%%%%%%%%%%%%%%%%%%%%%%%%%%%%%%%%%%%%%%%%%%%%%%%%%%%%%%%%%%%%%%%%%%%%%%%
%%%%%%%%%%%%%%%%%%%%%%%%%%%%%%%%%%%%%%%%%%%%%%%%%%%%%%%%%%%%%%%%%%%%%%%%%%%%%%%
%
The system under consideration in the first example is a Duffing oscillator, which in state-space form reads
\begin{equation}\label{eq: Duffing}
	\begin{aligned}
		\frac{\mathrm{d}}{\mathrm{d}t} \begin{bmatrix} x_1(t) \\ x_2(t) \end{bmatrix} &= 
		A
		\begin{bmatrix} x_1(t) \\ x_2(t) \end{bmatrix} 
		+ 
		\begin{bmatrix}
			0 \\ 
			- \beta \, x_1(t)^3
		\end{bmatrix} + G \, v(t),~~~
		\begin{bmatrix} x_1(0) \\ x_2(0) \end{bmatrix} = x_0 + \eta,\\
		y(t) &= C \begin{bmatrix} x_1(t) \\ x_2(t) \end{bmatrix} + \mu(t),
	\end{aligned}
\end{equation}
where
\begin{equation*}
	A= \begin{bmatrix}
		0 & 1 \\
		- \lambda & - \delta
	\end{bmatrix},
    \quad
	G = \begin{bmatrix} 0 \\ 1 \end{bmatrix},
    \quad\text{and}\quad
	C = \begin{bmatrix} 1 & 0\end{bmatrix}.
\end{equation*}
The parameters for the system and the disturbances for the construction of the output $y$ are set to 
\begin{equation*}
	\lambda = -1,~ \beta = 1,~ \delta =\tfrac{3}{10},~ v(t) = \gamma \cos(\omega t),~ \gamma = \tfrac{1}{2},~ \omega = \tfrac{12}{10}.
\end{equation*}
This setting is known to cause chaotic behavior \cite{JoSm07,BreKu21} and results in a clear discrepancy between the extended Kalman filter and the Mortensen observer \cite{BreKuSc23}.
The disturbance in the output is set to
$\mu(t) = \tfrac{1}{20} \sin(2 \pi t)$
and for the initial state and its disturbance we set 
$x_0 = \begin{bmatrix}
	0 & 0
\end{bmatrix}^\top$ and $\eta = \begin{bmatrix}
	-1.216 & 0.493
\end{bmatrix}^\top$.
Note that this combination of a chaotic system with challenging disturbances makes the state estimation an extremely difficult task. In fact, no observer without properly tuned weighting matrices can be expected to yield satisfactory reconstructions of the state. In our experiments we consider the time horizon $[0,T] = [0,10]$ and construct the higher order extended Kalman filters up to order eight. The $k$-th order observer is denoted by $\widehat{x}_k$ and the Mortensen observer constructed for comparison is referred to as $\widehat{x}_\mathrm{M}$. Since the output is linear, the second order observer $\widehat{x}_2$ coincides with the extended Kalman filter, cf.~\eqref{eq: ext Kalman}. Finally, the disturbed trajectory and the measured output are denoted as $x$ and $y$, respectively.
While the weighting matrices for the initial state and the process are set constant to $\Gamma = I_2$ and $R = 1$, we realize the observers for two different weightings of the output. 

The results are presented in \Cref{fig:duffing_observers}, see the left column for $Q = \tfrac{1}{2}$ and the right column for $Q = 2$. As announced, a clear difference is visible when comparing the extended Kalman filter $\widehat{x}_2$ with the Mortensen observer $\widehat{x}_\mathrm{M}$, both in the position and velocity and for $Q = \tfrac{1}{2} $ and $Q = 2 $. Note that for both tracking weights the Mortensen observer $\widehat{x}_\mathrm{M}$ is only displayed roughly up to $t = 6$. For larger times the gradient descent scheme used for the approximation of $\widehat{x}_\mathrm{M}$ fails. Comparing the results for the two tracking weights we find that the discrepancy between $\widehat{x}_2$ and $\widehat{x}_\mathrm{M}$ is less pronounced for the larger weighting of the output. 
We further point out that for $Q = \tfrac{1}{2} $ the third order observer $\widehat{x}_3$ does not exist on the full time interval but appears to exhibit a finite time blow-up around $t = 2$, cf.~\Cref{subf: duffing_pos}. Since no existence theory is available for the higher order observers, this is an expected risk and illustrates one of the challenges for future work. In contrast to that, for $Q = 2$ the trajectory $\widehat{x}_3$ exists for the full duration of the experiment.
In line with this we find that for $Q=2 $ the observers $\widehat{x}_k$ appear to be close to converging to $\widehat{x}_\mathrm{M}$ starting at $k = 4$. For $Q = \tfrac{1}{2}$ this effect is only visible starting at $k = 5$. This observation is additionally supported by the relative distance to the Mortensen observer
\begin{equation*}
    d_k(t) \coloneqq \frac{\Vert \widehat{x}_k(t) - \widehat{x}_\mathrm{M}(t) \Vert}{\Vert \widehat{x}_\mathrm{M}(t) \Vert},
    \qquad k = 2,\dots,8,
\end{equation*}
displayed in \Cref{subf: duffing_dist} and \Cref{subf: duffing_dist_a2}.
This example illustrates how higher order Kalman filters can successfully approximate the energy optimal observer. Keeping in mind that solving for $\widehat{x}_{\mathrm{M}}$ requires the value function, e.g., the solution of the HJB equation, the presented approach may offer a significant decrease in computational effort. 

%
%%%%%%%%%%%%%%%%%%%%%%%%%%%%%%%%%%%%%%%%%%%%%%%%%%%%%%%%%%%%%%
%%%%%%%%%%%%%%%%%Duffing observer plots%%%%%%%%%%%%%%%%%%%%%%%
%%%%%%%%%%%%%%%%%%%%%%%T=10;a=0.5%%%%%%%%%%%%%%%%%%%%%%%%%%%%%
%
\begin{figure}
	%%%%%%%%%%%%%%%%%%%%%%%%%%%%%%%%%%%%%%%%%%%%%%%%%%%%%%%%%%%
	%Position a = 0.5
	%%%%%%%%%%%%%%%%%%%%%%%%%%%%%%%%%%%%%%%%%%%%%%%%%%%%%%%%%%%
	\begin{subfigure}{0.49\textwidth}
        \includegraphics[scale = 0.95]{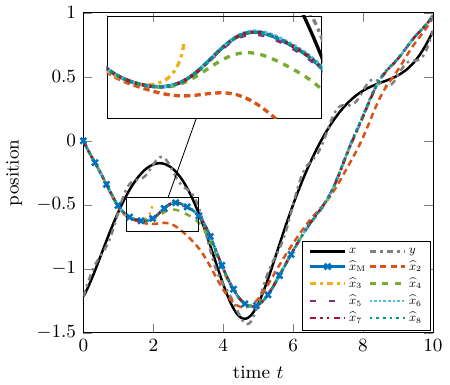}
		\caption{Positions for $Q = \tfrac{1}{2}$}
		\label{subf: duffing_pos}
	\end{subfigure}
    %%%%%%%%%%%%%%%%%%%%%%%%%%%%%%%%%%%%%%%%%%%%%%%%%%%%%%%%%%%
	%Position a = 2
	%%%%%%%%%%%%%%%%%%%%%%%%%%%%%%%%%%%%%%%%%%%%%%%%%%%%%%%%%%%
	\begin{subfigure}{0.49\textwidth}
        \includegraphics[scale = 0.95]{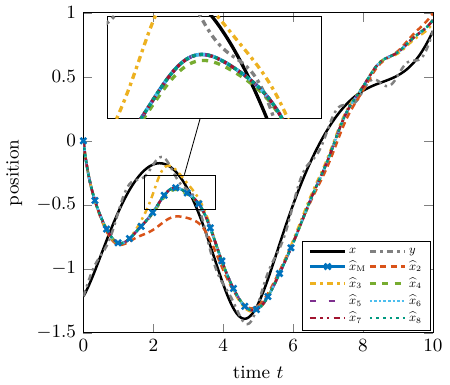}
		\caption{Positions for $Q = 2$}
		\label{subf: duffing_pos_a2}
	\end{subfigure}
	%%%%%%%%%%%%%%%%%%%%%%%%%%%%%%%%%%%%%%%%%%%%%%%%%%%%%%%%%%%
	%Velocity a = 0.5
	%%%%%%%%%%%%%%%%%%%%%%%%%%%%%%%%%%%%%%%%%%%%%%%%%%%%%%%%%%%
	\begin{subfigure}{0.49\textwidth}
        \includegraphics[scale = 0.95]{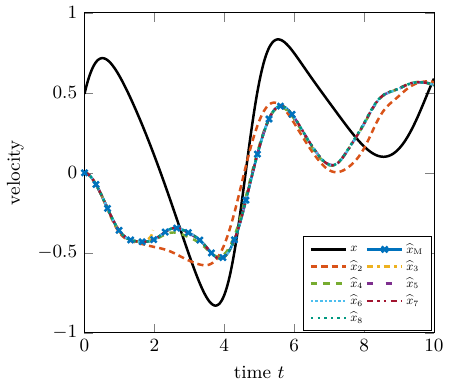}
		\caption{Velocities for $Q = \tfrac{1}{2}$}
		\label{subf: duffing_vel}
	\end{subfigure}
	%%%%%%%%%%%%%%%%%%%%%%%%%%%%%%%%%%%%%%%%%%%%%%%%%%%%%%%%%%%
	%Velocity a = 2
	%%%%%%%%%%%%%%%%%%%%%%%%%%%%%%%%%%%%%%%%%%%%%%%%%%%%%%%%%%%
	\begin{subfigure}{0.49\textwidth}
        \includegraphics[scale = 0.95]{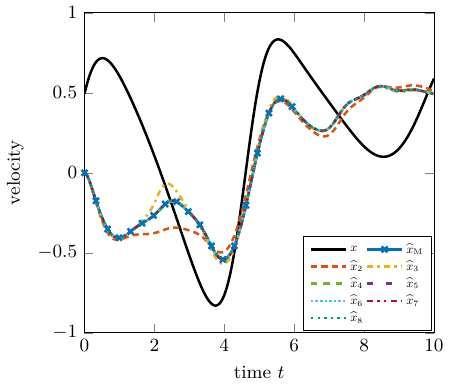}
		\caption{Velocities for $Q = 2$}
		\label{subf: duffing_vel_a2}
	\end{subfigure}
	%%%%%%%%%%%%%%%%%%%%%%%%%%%%%%%%%%%%%%%%%%%%%%%%%%%%%%%%%%%
	%Relative distance a = 0.5
	%%%%%%%%%%%%%%%%%%%%%%%%%%%%%%%%%%%%%%%%%%%%%%%%%%%%%%%%%%%
	\begin{subfigure}{0.49\textwidth}
        \includegraphics[scale = 0.95]{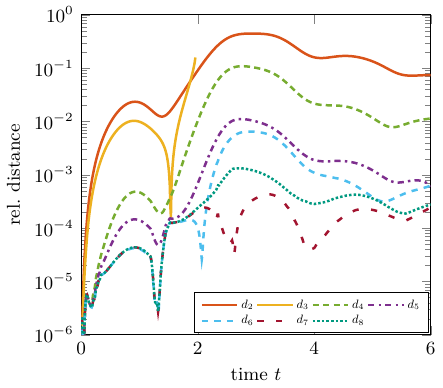}
		\caption{Relative distance to Mortensen for $Q = \tfrac{1}{2}$}
		\label{subf: duffing_dist}
	\end{subfigure}
	%%%%%%%%%%%%%%%%%%%%%%%%%%%%%%%%%%%%%%%%%%%%%%%%%%%%%%%%%%%
	%Relative distance a = 2
	%%%%%%%%%%%%%%%%%%%%%%%%%%%%%%%%%%%%%%%%%%%%%%%%%%%%%%%%%%%
	\begin{subfigure}{0.49\textwidth}
        \includegraphics[scale = 0.95]{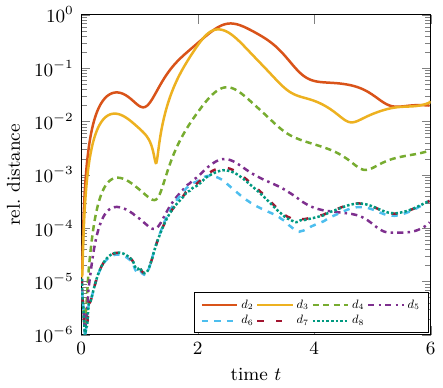}
		\caption{Relative distance to Mortensen for $Q = 2$}
		\label{subf: duffing_dist_a2}
	\end{subfigure}
    \caption{Duffing oscillator for the two tracking weights $Q = \tfrac{1}{2}$ and $Q = 2$}
    \label{fig:duffing_observers}
\end{figure}
%

%
%%%%%%%%%%%%%%%%%%%%%%%%%%%%%%%%%%%%%%%%%%%%%%%%%%%%%%%%%%%%%%%%%%%%%%%%%%%%%%%
%%%%%%%%%%%%%%%%%%%%%%%%%%%%%%%%%%%%%%%%%%%%%%%%%%%%%%%%%%%%%%%%%%%%%%%%%%%%%%%
\subsection{Semi-discretized cubic wave equation}
%%%%%%%%%%%%%%%%%%%%%%%%%%%%%%%%%%%%%%%%%%%%%%%%%%%%%%%%%%%%%%%%%%%%%%%%%%%%%%%
%%%%%%%%%%%%%%%%%%%%%%%%%%%%%%%%%%%%%%%%%%%%%%%%%%%%%%%%%%%%%%%%%%%%%%%%%%%%%%%
%

In the second experiment we consider the defocusing, cubic wave equation given by 
\begin{equation*}
\begin{aligned}
	\partial_{tt} {u}(t,x) + {u}^3(t,x) &= \Delta {u}(t,x) + \nu(t,x)
	~~~~~&&(t,x) \in (0,T) \times \Omega, \\
	{u}(0,x) &= u_1(x) + \eta_1(x)
	~~~~~&&x \in \Omega,\\
	\partial_t {u}(0,x) &= u_2(x) + \eta_2(x)
	~~~~~&&x \in \Omega,
\end{aligned}
\end{equation*}
with  spatial domain $\Omega = (0,1) \times (0,1) \subset \mathbb{R}^2$, given initial data $u_1 \in H^1_0(\Omega)$ and $u_2 \in L^2(\Omega)$, homogeneous Dirichlet boundary conditions, and unknown disturbances $\eta_1 \in H^1_0(\Omega)$, $\eta_2 \in L^2(\Omega)$, and $\nu \in L^2(0,T;L^2(\Omega))$. The output is modeled as 
\begin{equation*}
	y(t) = \mathcal{C} \begin{bmatrix} u(t) \\ \partial_t u(t) \end{bmatrix}
	+ \mu(t)
	~~~~~ t \in (0,T), 
\end{equation*}
with a bounded, linear operator $\mathcal{C} \colon H^1(\Omega) \times L^2(\Omega) \to \mathbb{R}^p$, $p \in \mathbb{N}$, and the observation error $\mu \in L^2(0,T;\mathbb{R}^p)$.
In \cite{Sch24} we studied the Mortensen observer for an equation of this type with a three-dimensional domain $\Omega$. Specifically, we reformulated the system in first order form using mild solutions, derived the associated observer equation and showed local well-posedness of the estimator.
For the sake of completeness we repeat the critical concepts here. The first order form reads
\begin{equation}\label{eq: waveNum}
	\begin{aligned}
		\frac{\mathrm{d}}{\mathrm{d}t} \begin{bmatrix} w_1(t) \\ w_2(t) \end{bmatrix}
		&=
		\mathcal{A}
		\begin{bmatrix} w_1(t) \\ w_2(t) \end{bmatrix}
		+
		\begin{bmatrix} 0 \\ - w_1^3(t) \end{bmatrix}
		+ \begin{bmatrix} 0 \\ \nu(t) \end{bmatrix}
		~~~~~ t \in (0,T),
		\quad
		\begin{bmatrix} w_1(0) \\ w_2(0) \end{bmatrix}
		= w_0 + \eta,\\
		y(t) &= \mathcal{C} \begin{bmatrix} w_1(t) \\ w_2(t) \end{bmatrix} + \mu(t) 
		~~~~~ t \in (0,T),
	\end{aligned}
\end{equation}
where $\mathcal{A} = \begin{bmatrix} 0 & \mathrm{Id} \\ \Delta & 0 \end{bmatrix}$ denotes the unbounded wave operator in the energy space $\mathcal{E} = H^1_0(\Omega) \times L^2(\Omega)$ and we set $w_0 = [u_1,u_2] \in \mathcal{E}$ and $\eta = [\eta_1,\eta_2] \in \mathcal{E}$.
The observation operator $\mathcal{C}$ used in this specific experiment is constructed as follows. 
We assume access to time-continuous measurements of the wave's displacement $w_1$ stemming from $p \in \mathbb{N}$ sensors placed in the domain $\Omega$ and denote the position of the $i$-th sensor by $(\nu_1^i,\nu_2^i) \in \Omega$. Since $w_1(t,\cdot) \in H^1_0(\Omega)$ might not be continuous, point evaluations are not available. Instead, we model each sensor to average the displacement on a square of length $2 \ell > 0$ around $(\nu_1^i,\nu_2^i)$ and arrive at the output operator 
\begin{equation*}
	\mathcal{C} \begin{bmatrix} w_1 \\ w_2 \end{bmatrix} 
	=
	\frac{1}{4\ell^2}
	\begin{bmatrix}
		\int_{S^1} 
		w_1(x) \, \mathrm{d} x 
		~\dots~
		\int_{S^p} 
		w_1(x) \, \mathrm{d} x
	\end{bmatrix}^\top,
\end{equation*}
where $S^i = (\nu_1^i - \ell,\nu_1^i + \ell) \times (\nu_2^i - \ell,\nu_2^i + \ell)$ denotes the square of length $2 \ell$ around the $i$-th sensor.

The extension of the Mortensen observer to the wave equation proposed in \cite{Sch24} is based on the value function $\mathcal{V}(t,\Xi)$ associated with the control problem
\begin{equation}\label{eq: OCPWaveNum}
	\begin{aligned}
		\min J(w,\nu;t,y_\mathrm{abs})
		&= \tfrac{1}{2} \Vert w(0) - w_0 \Vert_\mathcal{E}^2
		+ \tfrac{1}{2} \int_0^t \Vert \nu(s) \Vert_{L^2(\Omega)}^2
		+ \Vert y(s) - \mathcal{C} w(s) \Vert^2 \, \mathrm{d}s\\
		\text{subject to}~~~~~~~~~~&\\
		\frac{\mathrm{d}}{\mathrm{d}s} w(s)
		&=
		\mathcal{A}
		w(s)
		+
		\begin{bmatrix} 0 \\ - w_1^3(s) \end{bmatrix}
		+ \begin{bmatrix} 0 \\ \nu(s) \end{bmatrix}
		~~~~~ s \in (0,t),\\
		w(t)
		& = \Xi,
	\end{aligned}
\end{equation}
defined on infinite-dimensional Hilbert spaces. We note that the three terms in the cost function $J$ appear in their respective Hilbert space norms. This is to be understood as the infinite-dimensional extension of using Euclidean norms, i.e., setting the weights $\Gamma$, $R$, and $Q$ as identities.

%
%%%%%%%%%%%%%%%%%%%%%%%%%%%%%%%%%%%%%%%%%%%%%%%%%%%%
%%%%%%%%%%%%%%%%%%%%%%%%%%%%%%%%%%%%%%%%%%%%%%%%%%%%
\subsubsection{Discretization in space}\label{subs: discretization in space}
%%%%%%%%%%%%%%%%%%%%%%%%%%%%%%%%%%%%%%%%%%%%%%%%%%%%
%%%%%%%%%%%%%%%%%%%%%%%%%%%%%%%%%%%%%%%%%%%%%%%%%%%%
%
We briefly describe the spatial discretization based on spectral-like elements. For a more detailed presentation we refer to the thesis \cite{Schr25}.
For $i,j \in \mathbb{N}$ we define the basis function
\begin{equation*}
	\varphi_{i,j} \colon \Omega \rightarrow \mathbb{R},
	~~~~~ \varphi_{i,j}(x,y) = \sin(\pi i x ) \, \sin(\pi j y ),
\end{equation*}
and obtain the basis $\left\{ \varphi_{i,j} \mid i,j \in \mathbb{N} \right\}$ that is orthogonal both in $H^1_0(\Omega)$ and $L^2(\Omega)$.
For any time-dependent trajectory of states $w(t,x_1,x_2) = \begin{bmatrix} w_1(t,x_1,x_2) \\ w_2(t,x_1,x_2) \end{bmatrix}$ we have
\begin{equation}\label{eq: basisRep}
	w_1(t,x_1,x_2) = \sum_{i,j = 1}^\infty z_{1,i,j}(t) \varphi_{i,j}(x_1,x_2),
	~~~~~
	w_2(t,x_1,x_2) = \sum_{k = 1}^\infty z_{2,i,j}(t) \varphi_{i,j}(x_1,x_2),
\end{equation}
with appropriate, time-dependent coordinates $ z_{1,i,j}$ and $ z_{2,i,j}$. We obtain the spatial discretization by choosing an integer $K \geq 2$ and truncating the basis to
\begin{equation*}
	\mathcal{B} = \left\{ \varphi_{i,j} \colon i + j \leq K \right\},
\end{equation*}
containing $N=\frac{K(K-1)}{2}$ basis elements.
Inserting the approximation of $w_1$ and $w_2$ obtained by truncating \eqref{eq: basisRep} using the constructed basis (and analogous approximations of $\nu$, $w_0$, and $\eta$) into the control problem \eqref{eq: OCPWaveNum} yields a finite-dimensional approximation where the states are characterized by the coordinates $z \colon (0,T) \rightarrow \mathbb{R}^{2N},~ z(\cdot) = \begin{bmatrix} z_1(\cdot) \\ z_2(\cdot) \end{bmatrix}$.
By algebraic manipulations and a state space transformation one arrives at the equivalent formulation
\begin{equation}\label{eq: SemiDOCPTraf}
	\begin{aligned}
		\min \mathcal{J}(z,v;t,y) 
		&= \frac{1}{2} \Vert z(0) - \mathcal{T} z_0 \Vert^2
		+ \frac{1}{2} \int_0^t \Vert v(s) \Vert^2 + \Vert y(s) - C z(s) \Vert^2 \, \mathrm{d}s,\\
		\text{subject to}~~~~~~~~~& \\
		\dot{z}
		&= A z
		+ \begin{bmatrix} 0 \\ M^{-\tfrac{1}{2}} g(M^{-\tfrac{1}{2}}z_1) \end{bmatrix}
		+ B v,\\
		z(t) &= \mathcal{T} \xi,
	\end{aligned}
\end{equation}
where $z_0$, $\xi$, and $v$ correspond to the truncated coordinates of $w_0$, $\Xi$, and $\nu$, respectively. The system matrices are given as
\begin{equation}\label{eq:sysMatWave}
	A = \begin{bmatrix} 0 & S^{\tfrac{1}{2}} M^{-\tfrac{1}{2}} 
		\\ - M^{-\tfrac{1}{2}} S^{\tfrac{1}{2}} & 0 \end{bmatrix},
	~~~~~~
	B = \begin{bmatrix} 0 \\ \mathrm{Id} \end{bmatrix},
	~~~~~~
	C = \Tilde{C} \, T^{-1},
\end{equation}
where $\Tilde{C}= \begin{bmatrix} \Tilde{C}_1 & 0 \end{bmatrix} \in \mathbb{R}^{p,2N}$ is defined via 
\begin{equation*}
	\Tilde{C}_1 =
	\frac{1}{4\ell^2}
	\begin{bmatrix} \int_{S^i} \varphi_k (x) \, \mathrm{d}x \end{bmatrix}_{i=1,k=1}^{i=p,k=N},
\end{equation*}
and
\begin{equation*}
	M = \left[ (\varphi_i,\varphi_j)_{L^2(\Omega)} \right]_{i,j=1}^N,
	\quad 
	S = \left[ (\varphi_i,\varphi_j)_{H^1_0(\Omega)} \right]_{i,j=1}^N
	\quad 
    \text{and}
    \quad 
	\mathcal{T} = \begin{bmatrix} S^{\tfrac{1}{2}} & 0 \\ 0 & M^{\tfrac{1}{2}} \end{bmatrix}.
\end{equation*}
The nonlinearity is
\begin{equation*}
	g(z) 
	= g\left(\begin{bmatrix} z_1 \\ z_2 \end{bmatrix} \right) 
	= \left[ \left( (\sum_{k=1}^N z_{1,k} \varphi_k )^3 , \varphi_i \right)_{L^2(\Omega)} \right]_{i=1}^N.
\end{equation*}
%

%
%%%%%%%%%%%%%%%%%%%%%%%%%%%%%%%%%%%%%%%%%%%%%%%%%%%%
%%%%%%%%%%%%%%%%%%%%%%%%%%%%%%%%%%%%%%%%%%%%%%%%%%%%
\subsubsection{Realization of the estimators}\label{subsubsec:WaveMort}
%%%%%%%%%%%%%%%%%%%%%%%%%%%%%%%%%%%%%%%%%%%%%%%%%%%%
%%%%%%%%%%%%%%%%%%%%%%%%%%%%%%%%%%%%%%%%%%%%%%%%%%%%
%
We realize the approximations of the Mortensen observer for the semi-discretized system associated with the control problem \eqref{eq: SemiDOCPTraf} of dimension $2N$. The system matrices $(A,B,C)$ are given in \eqref{eq:sysMatWave} and the cost functional $J$, see \eqref{eq: OCPWaveNum}, in combination with our chosen discretization scheme implies that the weights $\Gamma$, $R$, and $Q$ are given by identities of appropriate sizes, cf.~\eqref{eq: SemiDOCPTraf}. The higher order extended Kalman filters are then realized as described in the beginning of \Cref{sec:Num}. 

As mentioned above the dimension of the system prohibits a realization of the Mortensen observer by means of the approach that was applied for the first example. Instead, we construct a data-driven approximation $\mathcal{V}_p$ of the associated value function using a slightly adjusted version of the scheme proposed in \cite{BreKuSc23}. The estimators are then realized either by solving the observer equation using $\nabla^2 \mathcal{V}_p^{-1}$ or by minimizing $\mathcal{V}_p(t,\cdot)$. 
We begin by pointing out the minor adjustments. First, the gradient descent scheme for solving open-loop control problems to generate the data set is altered. More precisely, we omit the line search applied in \cite[Alg.~3.1]{BreKuSc23} and instead set the step size directly via Barzilai-Borwein.
The second adjustment is concerned with fitting a polynomial to the data set. Denoting the evaluation of the polynomial basis functions in the sampling points by $H$, the vector containing the data points by $V$, and the size of the polynomial basis by $N_p$ it reads
\begin{equation*}
    \min_{\theta \in \mathbb{R}^{N_p}}
	\Vert H \, \theta - V \Vert_2^2.
\end{equation*}
Here the large number of samples and basis functions required for a sufficiently accurate approximation of the value function lead to a dense matrix $H$ of enormous dimension. Hence, due to memory limitations, 
a direct solve is no longer feasible. 
Instead we solve
\begin{equation*}
	\min_{\theta \in \mathbb{R}^{N_p}}
	\Vert H^\top H \, \theta - H^\top V \Vert_2^2
\end{equation*}
using the \Matlab, backslash routine, i.e., $\theta = (H^\top H) \backslash (H^\top V) $, where $H^\top H$ and $H^\top V$ are constructed in a block-wise, iterative fashion, sidestepping the memory issue.

We turn to the specifics of the implementation and note that the gradient descent scheme terminates when reaching a relative tolerance of $10^{-6}$.
The associated state equations and adjoint equations are solved by an application of a BDF4 scheme using 501 time discretization points. The nonlinearities are treated by a Newton scheme with absolute tolerance $10^{-12}$. Since the Riccati equations yielding evaluations of the Hessians depend on the previously realized optimal states and adjoints, we integrate the DREs using a BDF4 scheme using the same time nodes. The implicit time steps are realized by the \Matlab~ routine icare. Only for the four initial time steps a Newton scheme is employed, where the absolute tolerance is set to $10^{-8}$. The first initial control for the gradient descent is set to zero and the second one is obtained by performing the first step of the gradient descent with fixed step size $10^{-4}$.
With an approximation of the value function at hand the observer equation is integrated using a BDF4 scheme with $10^3$ discretization points and a Newton scheme scheme with absolute tolerance $10^{-8}$. Also, the approximated value function is minimized using a gradient descent scheme with relative and absolute tolerance $10^{-6}$.

%
%%%%%%%%%%%%%%%%%%%%%%%%%%%%%%%%%%%%%%%%%%%%%%%%%%%%
%%%%%%%%%%%%%%%%%%%%%%%%%%%%%%%%%%%%%%%%%%%%%%%%%%%%
\subsubsection{Results}
%%%%%%%%%%%%%%%%%%%%%%%%%%%%%%%%%%%%%%%%%%%%%%%%%%%%
%%%%%%%%%%%%%%%%%%%%%%%%%%%%%%%%%%%%%%%%%%%%%%%%%%%%
%
For our experiment we set the time horizon to $T = 2$ and for the spatial discretization of the PDE we set $K=4$ implying the use of $N= 6$ basis elements. Hence, the resulting system is of dimension $12$. We use $p=16 $ sensors that are placed on an equidistant grid of the domain and are characterized via the length $l = 0.01$, cf.~\Cref{fig: sensors} for an illustration in gray.  
Note that the placement of the sensors is suboptimal and could be improved upon, see for example \cite{WuJaEl16} and the references therein. 
The modeled initial state is set to 
\begin{equation*}
\begin{aligned}
    w_0(x_1,x_2) &= \begin{bmatrix} 8 \, \sin(\pi \, x_1) \, \sin(\pi \, x_2)\\0 \end{bmatrix}
	 + \begin{bmatrix} 4 \, \sin(\pi \, x_1) \, \sin(2 \pi \, x_2)\\0 \end{bmatrix}\\
	 &+ \begin{bmatrix} 2 \, \sin(\pi \, x_1) \, \sin(3 \pi \, x_2)\\0 \end{bmatrix}
	 + \begin{bmatrix} 2 \, \sin(2 \pi \, x_1) \, \sin(\pi \, x_2)\\0 \end{bmatrix}\\
	 &+ \begin{bmatrix} \, \sin(3 \pi \, x_1) \, \sin(\pi \, x_2)\\0 \end{bmatrix}
	 ,
\end{aligned}
\end{equation*}
leading to $z_0 = \begin{bmatrix} 8 & 4 & 2 & 2 & 0 & 1 & 0 & 0 & 0 & 0 & 0 & 0 \end{bmatrix}^\top$. 
The output $y$ is constructed by solving the system with the disturbances
\begin{equation*}
\begin{aligned}
	 \eta(x_1,x_2) &= \begin{bmatrix} - \sin(\pi \, x_1) \, \sin(\pi \, x_2)\\0 \end{bmatrix},\\
	 v(t,x_1,x_2) &= \tfrac{1}{10} \vert \sin(\tfrac{1}{2} \, \pi t) \vert \sin(\pi \, x_1) \sin(2 \pi x_2),\\
	 \mu_k(t) & = \frac{1}{20} \left(-2 + \frac{4(k-1)}{15} \right) \sin(5 \, \pi t),
	 ~~~~~~ k = 1,\dots,16.
\end{aligned}
\end{equation*}
For illustration, the disturbed initial displacement is plotted in \Cref{fig: sensors}.

For the data-driven approximation of the value function we use 200 time and 20 spatial samples, requiring a total of $4 \times 10^3$ value function evaluations. The sampling points are drawn from a neighborhood of the extended Kalman filter with radius $r_{k,i} = \max \left( 1, \tfrac{1}{10} \vert \widehat{z}_\mathrm{K} (t_k)_i \vert \right)$,  where $\widehat{z}_\mathrm{K}$ denotes the coordinates of the EKF. 
For the minimization based approximation of the estimator we build a polynomial basis using $24$ degrees in time and a hyperbolic cross with index $9$ for the spatial indices. The data points include only values and gradients with weight one for both, Hessians are omitted. The resulting estimator is denoted by $\widehat{x}_\mathrm{min}$.
On the other hand, the realization obtained by solving the observer equation is denoted by $\widehat{x}_\mathrm{eq}$ and for the associated value function approximation we use a polynomial basis with 29 degrees in time and hyperbolic cross index 9. The data includes the values with weight 0.01 and Hessians with weight 1. 

As mentioned above, the higher order extended Kalman filters are realized exactly as for the first example. We solve for the estimator $\widehat{x}_k$ of order $k$ for $k = 3, 4, 5$ and compare the results to the extended Kalman filter $\widehat{x}_\mathrm{K} = \widehat{x}_2$ and the two data-driven approximations $\widehat{x}_\mathrm{min}$ and $\widehat{x}_\mathrm{eq}$. 
Since all estimators aim at approximating the minimizer of the value function $\mathcal{V}$, we assess their approximation quality using two metrics: the evaluation of their respective energies, and the normed gradients of the value function. The energies are presented in \Cref{subf: wave_en}. We observe that the extended Kalman filter and the HOEKF3 display noticeably larger energies than the remaining estimators, which appear to perform similarly well. This indicates that for this example the commonly used extended Kalman filter displays noticeable differences from the energy optimal reconstruction. However, the higher order extended Kalman filters of order four and five seem to perform well in terms of the energy. In particular, their quality matches the data-driven results at a fraction of the computational effort. In fact, the energies of $\widehat{x}_4$, $\widehat{x}_5$, $\widehat{x}_\mathrm{min}$ and $\widehat{x}_\mathrm{eq}$ are virtually the same, with only a slight difference in favor of the HOEKF.
The normed gradients presented in \Cref{subf: wave_grad} offer a more nuanced insight. One finds that the gradients of $\widehat{x}_\mathrm{min}$ and $\widehat{x}_\mathrm{eq}$ are of magnitude $10^{-2}$ at most times, suggesting that they offer a reasonably good approximation of the minimizer. However, looking at $\widehat{x}_4$ and $\widehat{x}_5$ we find that they yield noticeably smaller gradients with $\Vert \nabla \mathcal{V}(t,\widehat{x}_5(t)) \Vert$ staying below $3 \times 10^{-3}$ at all times. 
We conclude that, for this example, the higher order Kalman filter is a viable alternative for the approximation of the minimum energy estimator that outperforms the previously available data-driven approach both in approximation quality and in computational efficiency. 

%%%%%%%%%%%%%%%%%%%%%%%%%%%%%%%%%%%%%%%%%%%%%%%%%%%%%%%%%%%%%%
%%%%%%%%%%%%%%%%%Wave Plots%%%%%%%%%%%%%%%%%%%%%%%%%%%%%%%%%%%
%%%%%%%%%%%%%%%%%%%%%%%%%%%%%%%%%%%%%%%%%%%%%%%%%%%%%%%%%%%%%%
\begin{figure}
    \center
	%%%%%%%%%%%%%%%%%%%%%%%%%%%%%%%%%%%%%%%%%%%%%%%%%%%%%%%%%%%
	%Energy
	%%%%%%%%%%%%%%%%%%%%%%%%%%%%%%%%%%%%%%%%%%%%%%%%%%%%%%%%%%%
	\begin{subfigure}{0.99\textwidth}
        \center
		\includegraphics[scale = 0.95]{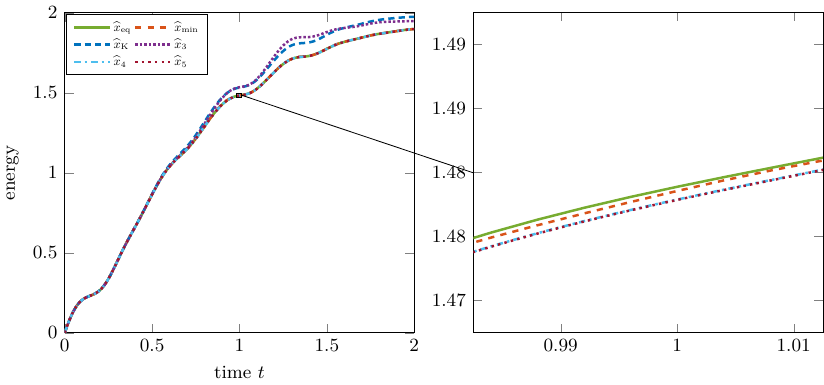}
		\caption{Energy along the estimators}
		\label{subf: wave_en}
	\end{subfigure}
	%%%%%%%%%%%%%%%%%%%%%%%%%%%%%%%%%%%%%%%%%%%%%%%%%%%%%%%%%%%
	%Normed gradient
	%%%%%%%%%%%%%%%%%%%%%%%%%%%%%%%%%%%%%%%%%%%%%%%%%%%%%%%%%%%
	\begin{subfigure}{0.49\textwidth}
		\includegraphics[scale = 0.95]{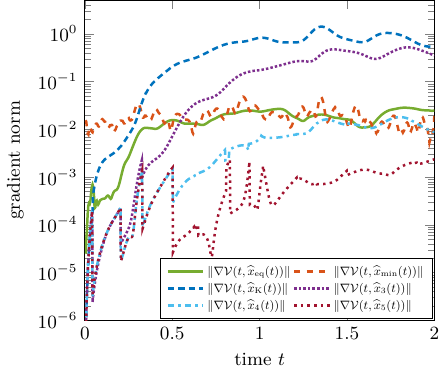}
		\caption{Gradient norms of the value function}
		\label{subf: wave_grad}
	\end{subfigure}
    \hfill
	%%%%%%%%%%%%%%%%%%%%%%%%%%%%%%%%%%%%%%%%%%%%%%%%%%%%%%%%%%%
	%Sensors and initial state
	%%%%%%%%%%%%%%%%%%%%%%%%%%%%%%%%%%%%%%%%%%%%%%%%%%%%%%%%%%%
    \begin{subfigure}{0.49\textwidth}
    		\includegraphics[scale = 0.55]{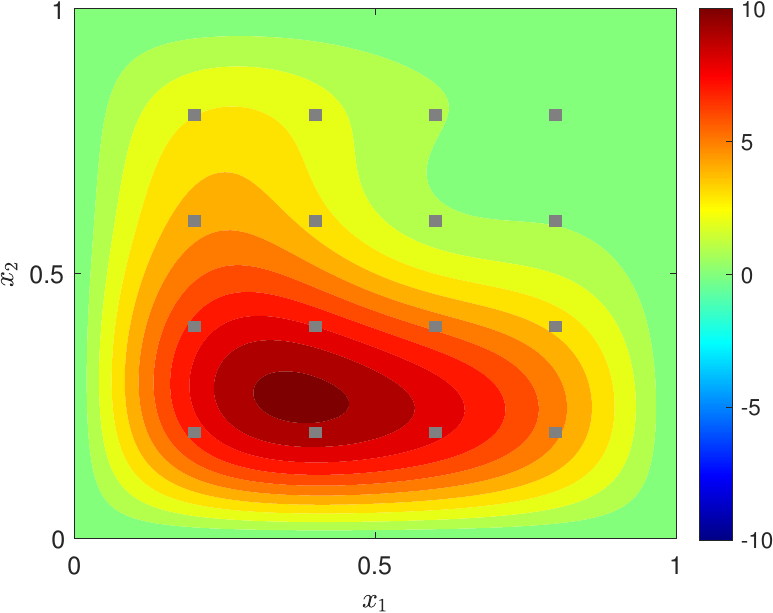}
    		\caption{Disturbed initial value and sensor placement}
    		\label{fig: sensors}
    \end{subfigure}
    \caption{Comparison of EKF, HOEKFs, and the data-driven approximations}
\end{figure}
%

%
%%%%%%%%%%%%%%%%%%%%%%%%%%%%%%%%%%%%%%%%%%%%%%%%%%%%%%%%%%%%%%%%%%%%%%%%%%%%%%%
%%%%%%%%%%%%%%%%%%%%%%%%%%%%%%%%%%%%%%%%%%%%%%%%%%%%%%%%%%%%%%%%%%%%%%%%%%%%%%%
\section{Conclusion \& Outlook}
%%%%%%%%%%%%%%%%%%%%%%%%%%%%%%%%%%%%%%%%%%%%%%%%%%%%%%%%%%%%%%%%%%%%%%%%%%%%%%%
%%%%%%%%%%%%%%%%%%%%%%%%%%%%%%%%%%%%%%%%%%%%%%%%%%%%%%%%%%%%%%%%%%%%%%%%%%%%%%%
%
By means of successive differentiations of the HJB equation, we derived a constitutive set of coupled vector, matrix and tensor differential equations generalizing the classical extended Kalman filter. From a theoretical perspective, the approach is justified by assuming that a higher order derivative of the minimum energy value function is negligible in the vicinity of the unknown Mortensen observer trajectory. The provided numerical experiment shows a considerable improvement of our approach over the extended Kalman filter in the sense of (numerically) converging to the exact Mortensen observer when higher order polynomials are considered.

While the method appears promising both in terms of required numerical effort as well as approximately solving the minimum energy estimation problem, many direct and related research questions remain open. First of all, we have not discussed existence and uniqueness of a solution to \cref{EQ:HigherOrderObserver}. Even if this can be shown, the results should only be expected to be of a local nature as already our numerical experiments indicate a strong dependence of the involved parameters and the degree of the considered polynomials. On the other hand, a theoretical study of \cref{EQ:HigherOrderObserver} could further open up a way to analytically explain the frequently reported success of the extended Kalman filter. From a numerical perspective, the next steps should evolve around a more efficient implementation for solving \cref{EQ:HigherOrderObserver}, a natural starting point being the recently developed tensor methods for Taylor series based feedback stabilization from \cite{Kraetal24,CorK25}.
%
%%%%%%%%%%%%%%%%%%%%%%%%%%%%%%%%%%%%%%%%%%%%%%%%%%%%%%%%%%%%%%%%%%%%%%%%%%%%%%%
%%%%%%%%%%%%%%%%%%%%%%%%%%%%%%%%%%%%%%%%%%%%%%%%%%%%%%%%%%%%%%%%%%%%%%%%%%%%%%%
%Literature
%%%%%%%%%%%%%%%%%%%%%%%%%%%%%%%%%%%%%%%%%%%%%%%%%%%%%%%%%%%%%%%%%%%%%%%%%%%%%%%
%%%%%%%%%%%%%%%%%%%%%%%%%%%%%%%%%%%%%%%%%%%%%%%%%%%%%%%%%%%%%%%%%%%%%%%%%%%%%%%
%

%%-----------------------------
\bibliographystyle{siam}
\bibliography{references} 

\begin{thebibliography}{10}

\bibitem{AguK14}
{\sc C.~O. Aguilar and A.~J. Krener}, {\em Numerical solutions to the {B}ellman
  equation of optimal control}, J.~Optim.~Theory Appl., 160 (2014),
  pp.~527--552.
\newblock https://doi.org/10.1007/s10957-013-0403-8.

\bibitem{Alb61}
{\sc E.~G. Al'brekht}, {\em On the optimal stabilization of nonlinear systems},
  J.~Appl.~Math.~Mech., 25 (1961), p.~1254.
\newblock https://doi.org/10.1016/0021-8928(61)90005-3.

\bibitem{BaKo25}
{\sc B.~W. Bader, T.~G. Kolda, et~al.}, {\em Tensor toolbox for matlab}, Dec.
  2025.
\newblock www.tensortoolbox.org.

\bibitem{Bel61}
{\sc R.~Bellman}, {\em Adaptive Control Processes: A Guided Tour}, Princeton
  University Press, 1961.

\bibitem{BorZ21}
{\sc J.~Borggaard and L.~Zietsman}, {\em On approximating polynomial-quadratic
  regulator problems}, IFAC-PapersOnLine, 54 (2021), pp.~329--334.
\newblock https://doi.org/10.1016/j.ifacol.2021.06.090.

\bibitem{BreKu21}
{\sc T.~Breiten and K.~Kunisch}, {\em Neural network based nonlinear
  observers}, Syst.~Control Lett., 148 (2021).
\newblock https://doi.org/10.1016/j.sysconle.2020.104829.

\bibitem{BreKP18}
{\sc T.~Breiten, K.~Kunisch, and L.~Pfeiffer}, {\em Numerical study of
  polynomial feedback laws for a bilinear control problem}, Math.~Control
  Relat.~Fields, 8 (2018), pp.~557--582.
\newblock https://doi.org/10.3934/mcrf.2018023.

\bibitem{BreKP19}
\leavevmode\vrule height 2pt depth -1.6pt width 23pt, {\em Taylor expansions of
  the value function associated with a bilinear optimal control problem},
  Ann.~Inst.~Henri Poincare (C) Anal.~Non Lineaire, 36 (2019), pp.~1361--1399.
\newblock https://doi.org/10.1016/j.anihpc.2019.01.001.

\bibitem{BreKuSc23}
{\sc T.~Breiten, K.~Kunisch, and J.~Schr\"{o}der}, {\em Numerical realization
  of the {M}ortensen observer via a {H}essian-augmented polynomial
  approximation of the value function}, SIAM J.~Sci.~Comput., 47 (2025),
  pp.~A181--A206.
\newblock https://doi.org/10.1137/23M1613773.

\bibitem{BrRaSc26}
{\sc T.~Breiten, J.~Ramme, and J.~Schröder}, {\em Code for the paper
  "{A}pproximations of the {M}ortensen observer using higher order extended
  {K}alman filters"}, Apr. 2026.
\newblock https://doi.org/10.5281/zenodo.19855072.

\bibitem{BreS24}
{\sc T.~Breiten and J.~Schr\"oder}, {\em Local well-posedness of the
  {M}ortensen observer}, ESAIM - Control Optim.~Calc.~Var., 30 (2024).
\newblock https://doi.org/10.1051/cocv/2024046.

\bibitem{ChaEtAl23}
{\sc L.-P. Chaintron, {\'A}.~M. González, L.~Mertz, and P.~Moireau}, {\em
  {M}ortensen observer for a class of variational inequalities – lost
  equivalence with stochastic filtering approaches}, ESAIM, Proc.~surv., 73
  (2023), pp.~130--157.
\newblock https://doi.org/10.1051/proc/202373130.

\bibitem{ChauMeMoZi25}
{\sc L.-P. Chaintron, L.~Mertz, P.~Moireau, and H.~Zidani}, {\em Constrained
  non-linear estimation and links with stochastic filtering}, 2025.
\newblock https://doi.org/10.48550/arXiv.2502.01200.

\bibitem{CorK25}
{\sc N.~A. Corbin and B.~Kramer}, {\em Scalable computation of $\mathcal
  {H}_\infty$ energy functions for polynomial control-affine systems}, IEEE
  Trans.~Autom.~Control, 70 (2025), pp.~3088--3100.
\newblock https://doi.org/10.1109/TAC.2024.3494472.

\bibitem{Fle97}
{\sc W.~H. Fleming}, {\em Deterministic nonlinear filtering},
  Ann.~Sc.~Norm.~Super.~Pisa, Cl. Sci., 25 (1997), pp.~435--454.

\bibitem{FleMcE00}
{\sc W.~H. Fleming and W.~M. McEneaney}, {\em A max-plus-based algorithm for a
  {H}amilton--{J}acobi--{B}ellman equation of nonlinear filtering}, SIAM
  J.~Control Optim., 38 (2000), pp.~683--710.
\newblock https://doi.org/10.1137/S0363012998332433.

\bibitem{Hac19}
{\sc W.~Hackbusch}, {\em Tensor spaces and numerical tensor calculus}, vol.~56
  of Springer Series in Computational Mathematics, Springer, Cham, second~ed.,
  2019.
\newblock https://doi.org/10.1007/978-3-030-35554-8.

\bibitem{Har06}
{\sc M.~Hardy}, {\em Combinatorics of partial derivatives}, Electron.~J.~Comb.,
  13 (2006).

\bibitem{Hij80}
{\sc O.~Hijab}, {\em Minimum energy estimation}, ph.{D}.~dissertation,
  University of California, Berkeley, 1980.

\bibitem{Hij82}
{\sc O.~Hijab}, {\em Asymptotic nonlinear filtering and large deviations}, in
  Advances in Filtering and Optimal Stochastic Control, Berlin Heidelberg,
  1982, Springer, pp.~170--176.
\newblock https://doi.org/10.1007/BFb0004536.

\bibitem{JoSm07}
{\sc D.~Jordan and P.~Smith}, {\em Nonlinear Ordinary Differential Equations:
  An Introduction for Scientists and Engineers}, vol.~10, Oxford University
  Press on Demand, fourth~ed., 2007.
\newblock https://doi.org/10.1093/oso/9780199208241.001.0001.

\bibitem{Kraetal24}
{\sc B.~Kramer, S.~Gugercin, J.~Borggaard, and L.~Balicki}, {\em Scalable
  computation of energy functions for nonlinear balanced truncation},
  Comput.~Methods Appl.~Mech.~Eng, 427 (2024), p.~117011.
\newblock https://doi.org/10.1016/j.cma.2024.117011.

\bibitem{KreAH13}
{\sc A.~Krener, C.~Aguilar, and T.~Hunt}, {\em Series solutions of {HJB}
  equations}, in Mathematical System Theory -- Festschrift in Honor of Uwe
  Helmke on the Occasion of his Sixtieth Birthday, K.~H\"uper and J.~Trumpf,
  eds., CreateSpace, 2013, pp.~247--260.

\bibitem{Kre03}
{\sc A.~J. Krener}, {\em The convergence of the minimum energy estimator}, in
  New Trends in Nonlinear Dynamics and Control and their Applications,
  Springer, Berlin, 2004, pp.~187--208.

\bibitem{Kre15}
\leavevmode\vrule height 2pt depth -1.6pt width 23pt, {\em Minimum energy
  estimation and moving horizon estimation}, in 2015 54th IEEE Conference on
  Decision and Control (CDC), 2015, pp.~4952--4957.
\newblock https://doi.org/10.1109/CDC.2015.7402993.

\bibitem{Kre18}
\leavevmode\vrule height 2pt depth -1.6pt width 23pt, {\em Minimum Energy
  Estimation Applied to the Lorenz Attractor}, Springer International
  Publishing, Cham, 2018, pp.~165--182.
\newblock https://doi.org/10.1007/978-3-030-01959-4.

\bibitem{Luk69}
{\sc D.~Lukes}, {\em Optimal regulation of nonlinear dynamical systems}, SIAM
  J.~on Contr., 7 (1969), pp.~75--100.
\newblock https://doi.org/10.1137/0307007.

\bibitem{Moi18}
{\sc P.~Moireau}, {\em A discrete-time optimal filtering approach for
  non-linear systems as a stable discretization of the {M}ortensen observer},
  ESAIM - Control Optim.~Calc.~Var., 24 (2018), pp.~1815--1847.
\newblock https://doi.org/10.1051/cocv/2017077.

\bibitem{Mor68}
{\sc R.~E. Mortensen}, {\em Maximum-likelihood recursive nonlinear filtering},
  J.~Optim.~Theory Appl., 2 (1968), pp.~386--394.
\newblock https://doi.org/10.1007/BF00925744.

\bibitem{Sal22}
{\sc L.~Sallandt}, {\em Computing high-dimensional value functions of optimal
  feedback control problems using the tensor-train format}, doctoral thesis,
  Technische Universität Berlin, 2022.
\newblock https://doi.org/10.14279/depositonce-12786.

\bibitem{Schr25}
{\sc J.~Schr\"oder}, {\em Energy-optimal state reconstruction for deterministic
  nonlinear dynamical systems}, doctoral thesis, Technische Universität
  Berlin, 2025.
\newblock https://doi.org/10.14279/depositonce-23323.

\bibitem{Sch24}
{\sc J.~Schröder}, {\em Local well-posedness of the minimum energy estimator
  for a defocusing cubic wave equation}, J.~Differ.~Equ., 435 (2025),
  p.~113258.
\newblock https://doi.org/10.1016/j.jde.2025.113258.

\bibitem{SchB24a}
{\sc J.~Schröder and T.~Breiten}, {\em {Code for the thesis ``Energy-Optimal
  State Reconstruction for Deterministic Nonlinear Dynamical Systems''}}, 2024.
\newblock https://doi.org/10.5281/zenodo.14394797.

\bibitem{Wil04}
{\sc J.~C. Willems}, {\em Deterministic least squares filtering}, J.~Econom.,
  118 (2004), pp.~341--373.
\newblock https://doi.org/10.1016/S0304-4076(03)00146-5.

\bibitem{Wol19}
{\sc A.~Wolf}, {\em Low rank tensor decompositions for high dimensional data
  approximation, recovery and prediction}, doctoral thesis, Technische
  Universität Berlin, 2019.
\newblock https://doi.org/10.14279/depositonce-8109.

\bibitem{WuJaEl16}
{\sc X.~Wu, B.~Jacob, and H.~Elbern}, {\em Optimal control and observation
  locations for time-varying systems on a finite-time horizon}, SIAM J.~Control
  Optim., 54 (2016), pp.~291--316.
\newblock https://doi.org/10.1137/15M1014759.

\end{thebibliography}
%%-----------------------------

\end{document}